\documentclass{amsart}
\usepackage{amssymb}
\usepackage{amsmath}
\usepackage{latexsym}
\usepackage{graphicx}
\usepackage{amscd}
\usepackage{mathrsfs}
\usepackage[english]{babel}
\usepackage{hyperref}
\usepackage{verbatim}
\usepackage{tikz}
\usetikzlibrary{math}

\input xy
\xyoption{all}

\newtheorem{theorem}[equation]{Theorem}
\newtheorem{corollary}[equation]{Corollary}
\newtheorem{prop}[equation]{Proposition}
\newtheorem{lemma}[equation]{Lemma}
\newtheorem{cor}[equation]{Corollary}

\theoremstyle{definition}

\newtheorem{exam}[equation]{Example}
\newtheorem{example}[equation]{Example}

\newtheorem{remark}[equation]{Remark}

\newtheorem*{theorem*}{Main Theorem}

\newcommand{\llbr}{[\negthinspace[}
\newcommand{\rrbr}{]\negthinspace]}

\newcommand{\llpar}{(\negthinspace(}
\newcommand{\rrpar}{)\negthinspace)}

\newcommand{\A}{\ensuremath{\mathbb{A}}}
\newcommand{\LL}{\ensuremath{\mathbb{L}}}
\newcommand{\PP}{\ensuremath{\mathbb{P}}}
\newcommand{\N}{\ensuremath{\mathbb{N}}}
\newcommand{\Z}{\ensuremath{\mathbb{Z}}}
\newcommand{\Q}{\ensuremath{\mathbb{Q}}}

\newcommand{\R}{\ensuremath{\mathbb{R}}}
\newcommand{\C}{\ensuremath{\mathbb{C}}}

\newcommand{\Pro}{\ensuremath{\mathbb{P}}}

\newcommand{\cU}{\ensuremath{\mathscr{U}}}

\newcommand{\cX}{\ensuremath{\mathscr{X}}}
\newcommand{\cY}{\ensuremath{\mathscr{Y}}}
\newcommand{\cZ}{\ensuremath{\mathscr{Z}}}

\newcommand{\Spec}{\ensuremath{\mathrm{Spec}\,}}

\newcommand{\red}{\mathrm{red}}

\newcommand{\Var}{\mathrm{Var}}
\newcommand{\Vardim}{\mathrm{Var}^{\mathrm{dim}}}

\newcommand{\Vol}{\mathrm{Vol}}
\newcommand{\Volsb}{\mathrm{Vol}_{\sbir}}
\newcommand{\Volbir}{\mathrm{Vol}_{\bir}}

\newcommand{\Gro}{\mathbf{K}}
\newcommand{\Grodim}{\mathbf{K}}
\newcommand{\Bir}{\mathrm{Bir}}
\newcommand{\bir}{\mathrm{bir}}
\newcommand{\SB}{\mathrm{SB}}
\newcommand{\SBdim}[1]{\mathrm{SB^{dim}_{#1}}}
\newcommand{\sbir}{\mathrm{sb}}
\newcommand{\Bitt}{\mathrm{B}}
\newcommand{\codim}{\mathrm{codim}}

\numberwithin{equation}{subsection}

\hyphenpenalty=6000 \tolerance=10000

\author{Johannes Nicaise}
\address{Imperial College,
Department of Mathematics, South Kensington Campus,
London SW72AZ, UK, and KU Leuven, Department of Mathematics, Celestijnenlaan 200B, 3001 Heverlee, Belgium.} \email{j.nicaise@imperial.ac.uk}

\author{John Christian Ottem}
\address{Department of Mathematics, University of Oslo, Box 1053, Blindern, 0316 Oslo, Norway}
\email{johnco@math.uio.no}

\begin{document}
\title[A refinement of the motivic volume]{A refinement of the motivic volume, and specialization of birational types}

\begin{abstract}
We construct an upgrade of the motivic volume by keeping track of dimensions in the Grothendieck ring of varieties.  
 This produces a uniform refinement of the motivic volume and its birational version introduced by Kontsevich and Tschinkel to prove the specialization of birational types. We also provide several explicit examples of obstructions to stable rationality arising from this technique.
\end{abstract}

\maketitle

\section{Introduction}\label{sec:intro}
 In \cite{NiSh}, Shinder and the first-named author used a refinement of Denef and Loeser's motivic nearby fiber to construct a motivic obstruction to the stable rationality of very general fibers of a degenerating family of smooth and proper complex varieties. This result implies in particular that stable rationality specializes in smooth and proper families. A variant of this method was developed by Kontsevich and Tschinkel in \cite{KT} to upgrade the results to rationality instead of stable rationality.
 
 The principal aim of the present article is to develop a unified framework for the invariants in \cite{NiSh} and \cite{KT}. The approach in \cite{NiSh} relies on the {\em motivic volume}, a motivic specialization morphism for Grothendieck rings of varieties which constructs a limit object for the classes of the fibers of degenerating families.
 It can be viewed as a version of the nearby cycles functor at the level of Grothendieck rings. It is closely related to the {\em motivic nearby fiber} of Denef and Loeser \cite{DL}, but with the crucial difference that its construction does not require the inversion of the class of the affine line in the Grothendieck ring. This is essential for the applications to rationality problems. The existence of the motivic volume can be deduced from the work of Hrushovski and Kazhdan \cite{HK}, but in \cite{NiSh} a direct proof was given based on the weak factorization theorem and logarithmic geometry.
 
 It was proved by Larsen and Lunts \cite{larsen-lunts} that the stable birational type of a smooth and proper variety over a field of characteristic zero can be read off from its class in the Grothendieck ring of varieties. However, it is an open problem whether this class also determines the birational type, which is why the results in \cite{NiSh} only give information about stable birational types. This issue was circumvented in \cite{KT} by working directly in the free abelian group on birational types instead of the Grothendieck ring of varieties, and constructing an analogous specialization morphism there.

 Here we introduce a refined version of the Grothendieck ring of varieties, graded by dimension, which detects birational types for trivial reasons. A variant of this graded ring already appeared in \cite{HK} in the form of the ring $!K(\mathrm{RES}[*])$. We then show that the arguments in \cite{NiSh} can be carried over {\em verbatim} to construct the motivic volume at this refined level. The result is an invariant that specializes simultaneously to the motivic volume from \cite{NiSh} and to its birational version in \cite{KT}. All the results for stable birational types in \cite{NiSh} can then immediately be upgraded to the level of birational types, yielding a unified treatment of the specialization results in \cite{NiSh} and \cite{KT}.

 A second new feature is that we present the construction of the motivic volume in a slightly more general setting (this generalization is explained in detail at the beginning of the proof of Theorem \ref{theo:MV}). We also replace as much as possible the language of logarithmic geometry by the more widely familiar language of toroidal embeddings. This makes the motivic volume more accessible and user-friendly. The proofs still  rely heavily on those in \cite{NiSh} and there logarithmic geometry remains by far the most efficient and transparent framework. Still, if the reader is willing to accept the most technical aspects of the construction as black boxes, then they can understand and use the motivic volume without any reference to logarithmic geometry.
 
 Denef and Loeser's motivic nearby fiber and the motivic volume  have found many profound applications in singularity theory and motivic Donaldson--Thomas theory; see for instance \cite{GLM, thuong, NP}. It would be interesting to investigate what additional information can be extracted from our refined version, beyond the applications to birational types discussed here.

\smallskip The paper is organized in the following way. Section \ref{sec:gro} is devoted to the Grothendieck ring of varieties. We first recall the definition of the classical Grothendieck ring of varieties, and we give an overview of its main properties, with an emphasis on the theorems of Bittner (Theorem \ref{thm:Bitt}) and Larsen \& Lunts (Theorem \ref{thm:LL}) which are the most important structural results in characteristic zero. We explain why the Grothendieck ring detects stable birational types in characteristic zero and why it is unclear whether it also detects birational types. This part is not needed for the remainder of the paper, but it places the results in their proper context. We then move on to  our refinement of the Grothendieck ring of varieties graded by dimension, and we show that it detects birational types in arbitrary characteristic (Proposition \ref{prop:grobir}). We prove analogs of the the theorems of Bittner (Theorem \ref{thm:dimBitt}) and Larsen \& Lunts (Theorem \ref{thm:dimLL}). The analog of Bittner's theorem is an essential ingredient in the construction of the motivic volume; the analog of the theorem of Larsen \& Lunts is not used in the sequel and is only included to further clarify the structure of the refined Grothendieck ring.

 In Section \ref{sec:motvol}, we present the construction of the motivic volume at the level of the refined Grothendieck ring, which is the key invariant for the applications to rationality problems. We define the motivic volume in terms of strictly toroidal models, making the construction as explicit as possible (Theorem \ref{theo:MV}). The proof of its existence is essentially the same as for the unrefined version in \cite{NiSh}; we explain the general strategy and highlight the new elements of the proof. The main applications to rationality problems are developed in Section \ref{ss:apprat}. We deduce the specialization of birational types in smooth and proper families (Theorem \ref{theo:spec}; this result was originally proved in \cite{KT}) and we explain how every strictly toroidal model gives rise to an obstruction to the (stable) rationality of the geometric generic fiber (Theorem \ref{theo:obstruct}). Some elementary  examples of such obstructions are given in Section \ref{ss:exam}; further applications can be found in \cite{NiSh} and \cite{NO}.

  Finally, in Section \ref{sec:monodromy}, we construct the monodromy action on the refined motivic volume (Theorem \ref{theo:MVmon}). This monodromy action is an important feature of the motivic volume and Denef and Loeser's motivic nearby fiber, as it captures the monodromy action on the nearby cycles complex at the motivic level. 

\subsection*{Terminology}
Let $F$ be a field, and let $X$ and $Y$ be reduced $F$-schemes of finite type.
 Then $X$ and $Y$ are called {\em birational}  if there exist a dense open subscheme $U$ of $X$ and a dense open subscheme $V$ of $Y$ such that the $F$-schemes $U$ and $V$ are isomorphic. This defines an equivalence relation on the set of isomorphism classes of reduced $F$-schemes of finite type; the equivalence class of $X$ is called its birational type. We say that $X$ is rational  if it is birational to the projective space $\mathbb{P}^d_F$ for some $d\geq 0$; this implies in particular that $X$ is integral.
 
 We say that $X$ and $Y$ are  {\em stably birational} if $X\times_{F}\mathbb{P}^m_F$ is birational to $Y\times_F \mathbb{P}^n_F$ for some $m,n\geq 0$. This again defines an equivalence relation on the set of isomorphism classes of reduced $F$-schemes of finite type; the equivalence class of $X$ is called its stable birational type. We say that $X$ is stably rational  if it is stably birational to $\Spec F$; equivalently, if $X\times_F \mathbb{P}^m_F$ is birational to $\mathbb{P}^n_F$ for some $m,n\geq 0$.
 
 It is obvious that birational schemes are also stably birational, so that rationality implies stable rationality. The converse is not true: the first counterexample was constructed in \cite{BCTSSD}.

\subsection*{Acknowledgements}
 It is a pleasure to thank the organizers of the conference {\em Rationality of Algebraic Varieties} on the beautiful Schiermonnikoog Island in April 2019 for putting together the event and inviting us to submit a contribution to the proceedings. Our investigation of a common refinement of the Grothendieck ring of varieties and the ring of birational types was motivated by questions from Gerard van der Geer and Lenny Taelman, and the very first sketch of this paper was made on the trip back from Schiermonnikoog. The first-named author would also like to thank Evgeny Shinder for the rewarding collaboration that has led to the article \cite{NiSh}, which is the basis for the present paper. We are grateful to Olivier Benoist, Jean-Louis Colliot-Th\'el\`ene, Alexander Kuznetsov, Daniel Litt, Sam Payne, Alex Perry and Stefan Schreieder for stimulating discussions during the preparation of this paper and \cite{NO}. Finally, our thanks go out to the referee for carefully reading the manuscript and making several valuable suggestions.
 
Johannes Nicaise is supported by EPSRC grant EP/S025839/1, grants G079218N of the Fund for Scientific Research--Flanders, and long term structural funding (Methusalem
grant) of the Flemish Government. John Christian Ottem is supported by the Research Council of Norway project no. 250104.

\section{The Grothendieck ring of varieties graded by dimension}\label{sec:gro}
\subsection{Reminders on the Grothendieck ring of varieties}
In this section we give a quick overview of the classical Grothendieck ring of varieties. We refer to \cite{motbook} for further background on the Grothendieck ring and its role in the theory of motivic integration.

Let $F$ be a field. The Grothendieck group $\Gro(\Var_F)$ of $F$-varieties is the abelian group with the following presentation.
 \begin{itemize}
 \item {\em Generators:} isomorphism classes $[X]$ of $F$-schemes $X$ of finite type;
 \item {\em Relations:} whenever $X$ is an $F$-scheme of finite type, and $Y$ is a closed subscheme of $X$, then $[X]=[Y]+[X\setminus Y].$
 \end{itemize}
These relations are often called {\em scissor relations} because they allow to cut a scheme into a partition by subschemes.
  The group $\Gro(\Var_F)$ has a unique ring structure such that $[X]\cdot [X']=[X\times_F X']$ for all $F$-schemes $X$ and $X'$ of finite type. The neutral element for the multiplication is $1=[\Spec F]$, the class of the point. We denote by $\LL=[\A^1_F]$ the class of the affine line in $\Gro(\Var_F)$. 

\begin{example}
Let $n$ be a positive integer. 
Partitioning $\PP^n_F$ into the hyperplane at infinity and its complement, we find 
$$[\PP^n_F]=[\PP^{n-1}_F]+[\A^n_F]=[\PP^{n-1}_F]+\LL^n.$$
Now it follows by induction on $n$ that 
$$[\PP^n_F]=1+\LL+\ldots+\LL^n$$ in $\Gro(\Var_F)$.
\end{example}

\begin{remark}
The Grothendieck ring $\Gro(\Var_F)$ is insensitive to non-reduced structures: if $X$ is an $F$-scheme of finite type and we denote by $X_{\red}$ its maximal reduced closed subscheme, then the complement of $X_{\red}$ in $X$ is empty, so that $[X]=[X_{\red}]$. Thus we would have obtained the same Grothendieck ring by taking as generators the isomorphism classes of reduced $F$-schemes of finite type (taking the fibered product in this category to define the ring multiplication). In fact, since every $F$-scheme of finite type can be partitioned into regular quasi-projective $F$-schemes, we could even have taken the isomorphism classes of such schemes as generators (but then some care is required in the definition of the product if $F$ is not perfect).
\end{remark}

The structure of the ring $\Gro(\Var_F)$ is still poorly understood. The main challenge is to characterize geometrically when two $F$-schemes $X$ and $X'$ of finite type define the same class in the Grothendieck ring of varieties. An obvious sufficient condition is that $X$ and $X'$ be piecewise isomorphic, that is, can be partitioned into subschemes that are pairwise isomorphic; then the scissor relations immediately imply that $[X]=[X']$. 

\begin{example}
Let $C\subset \A^2_F$ be the affine plane cusp over $F$, defined by the equation $y^2-x^3=0$.
 Then $C$ is piecewise isomorphic to the affine line $\A^1_F$, because $C\setminus \{(0,0)\}$ is isomorphic to $\A^1_F\setminus \{0\}$. It follows that $[C]=\LL$ in $\Gro(\Var_F)$.
\end{example}

However, this condition is not necessary: L.~Borisov has recently given an example of two $\C$-schemes $X$ and $X'$ of finite type such that $[X]=[X']$ but $X$ and $X'$ are not birational, and therefore certainly not piecewise isomorphic \cite{borisov}. This example is due to issues of cancellation: $X$ and $X'$ can be embedded into a common $\C$-scheme $W$ of finite type such that $W\setminus X$  and $W\setminus X'$ can be partitioned into pairwise isomorphic subschemes $W_1,\ldots,W_r$ and $W'_1,\ldots,W'_r$, respectively. 
It follows that $$[X]=[W]-\sum_{i=1}^r[W_i]=[W]-\sum_{i=1}^r[W'_i]=[X'],$$ even though $X$ and $X'$ are not piecewise isomorphic. The schemes $X$ and $X'$ in Borisov's example are smooth, but not proper. We do not know any such example where $X$ and $X'$ are smooth and proper, and it is still an open question whether the class in the Grothendieck ring detects the birational type of a smooth and proper $F$-scheme $Y$. We will see in Corollary \ref{cor:LL} that, when $F$ has characteristic zero, the class $[Y]$ determines the {\em stable} birational type of $Y$.  
 
 The structure of $\Gro(\Var_F)$ is particulary obscure when $F$ has positive characteristic. For instance, if $F$ is an imperfect field and $F'$ is a purely inseparable finite extension of $F$, it is not known whether $[\Spec F']$ is different from $[\Spec F]$. The situation is better in characteristic zero, thanks to resolution of singularities and the Weak Factorization Theorem.
 Let us recall two of the most powerful results in this setting: the theorems of Bittner and Larsen \& Lunts. We start with an easy consequence of Hironaka's resolution of singularities.
 
\begin{lemma}\label{lemm:Hir}
Let $F$ be a field of characteristic zero. Then the group $\Gro(\Var_F)$ is generated by the classes of smooth and proper $F$-schemes.
\end{lemma}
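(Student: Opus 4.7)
The plan is to use Noetherian induction on dimension, combined with Nagata compactification and Hironaka's resolution of singularities, both of which are available in characteristic zero.

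First I would reduce to integral schemes. Given an arbitrary $F$-scheme $X$ of finite type, the scissor relations let us write $[X]=[X_{\red}]$ and then decompose $X_{\red}$ as a disjoint union of an open consisting of the complement of the singular locus of its stratification by irreducible components, plus lower-dimensional pieces. Iterating, $[X]$ is a $\Z$-linear combination of classes $[X_i]$ with each $X_i$ integral, so it suffices to treat integral varieties.

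Then I would argue by induction on $d=\dim X$. For $d=0$, an integral $F$-variety is $\Spec E$ for some finite field extension $E/F$, which is smooth and proper, so there is nothing to prove. For the inductive step, assume the statement for all varieties of dimension strictly less than $d$, and let $X$ be an integral $F$-variety of dimension $d$. Using Nagata's theorem, embed $X$ as a dense open in a proper $F$-scheme $\bar X$. Then $\bar X\setminus X$ is closed of dimension $<d$, so by induction its class lies in the subgroup generated by smooth and proper classes; by the scissor relation $[X]=[\bar X]-[\bar X\setminus X]$, it suffices to handle $[\bar X]$.

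Now apply Hironaka's resolution of singularities: there exists a projective birational morphism $f\colon \widetilde{X}\to \bar X$ with $\widetilde{X}$ smooth and projective over $F$, and a closed subscheme $Z\subset \bar X$ of dimension $<d$ (containing the singular locus of $\bar X$) such that $f$ restricts to an isomorphism over $\bar X\setminus Z$. By scissor,
\[
[\widetilde{X}]=[\widetilde{X}\setminus f^{-1}(Z)]+[f^{-1}(Z)]=[\bar X\setminus Z]+[f^{-1}(Z)],
\]
so
\[
[\bar X]=[\bar X\setminus Z]+[Z]=[\widetilde{X}]-[f^{-1}(Z)]+[Z].
\]
Since $Z$ and $f^{-1}(Z)$ both have dimension strictly less than $d$, their classes belong to the desired subgroup by the induction hypothesis; $[\widetilde{X}]$ is by construction the class of a smooth and proper $F$-scheme. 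Hence $[\bar X]$, and thus $[X]$, lies in the subgroup generated by smooth and proper classes, completing the induction.

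The only non-trivial inputs are the two big theorems (Nagata compactification and Hironaka resolution); the rest is a bookkeeping exercise with the scissor relations. The main conceptual obstacle is realising one must compactify \emph{before} resolving, so that the resolved variety is not only smooth but also proper — this is precisely where the characteristic zero hypothesis is used.
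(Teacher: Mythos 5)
Your argument is correct and coincides with the paper's first proof: compactify, resolve singularities via Hironaka, and induct on dimension using the scissor relations. The only cosmetic differences are that you spell out Nagata compactification and the reduction to integral schemes explicitly (and slightly misattribute the role of characteristic zero, which is used for Hironaka rather than for the ordering of compactification and resolution), whereas the paper simply asserts that any reduced $X$ is birational to a smooth proper $X'$ and inducts on the complementary closed subschemes $Y$ and $Y'$.
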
 
\begin{proof}
Let $X$ be a reduced $F$-scheme of finite type. Since $F$ has characteristic zero, we know that $X$ is birational to a smooth and proper $F$-scheme $X'$; then there exist strict closed subschemes $Y$ and $Y'$ of $X$ and $X'$, respectively, such that $X\setminus Y$ is isomorphic to $X'\setminus Y'$.
 It follows from the scissor relations that $[X]=[X']-[Y']+[Y]$, and by induction on the dimension, we may assume that $[Y]$ and $[Y']$ can be written as 
  $\Z$-linear combinations of classes of smooth and proper $F$-schemes. 

 We can also give a slightly more involved argument that is often useful to find a more explicit expression of $[X]$ in terms of classes of smooth and proper $F$-schemes.  We can partition $X$ into separated smooth subschemes $X_1,\ldots,X_r$. Then the scissor relations imply that 
$[X]=[X_1]+\ldots+[X_r]$ in $\Gro(\Var_F)$. Thus it suffices to show that we can write $[X]$ as a linear combination of classes of smooth and proper $F$-schemes when $X$ is separated and smooth. By Hironaka's embedded resolution of singularities, we can find a smooth compactification $\overline{X}$ of $X$
 such that the boundary $\overline{X}\setminus X$ is a divisor with strict normal crossings. 
 Let $E_i,\,i\in I$ be the prime components of this divisor. For every subset $J$ of $I$, we set $E_J=\cap_{j\in J}E_j$ (in particular, $E_{\emptyset}=\overline{X}$). We endow the closed subsets $E_J$ of $\overline{X}$ 
with their induced reduced structures; then the schemes $E_J$ are smooth and proper over $F$.  Using the scissor relations and an inclusion-exclusion argument, one easily checks that 
 \begin{equation}\label{eq:compact}
 [X]=\sum_{J\subset I}(-1)^{|J|}[E_J]
 \end{equation}
 in $\Gro(\Var_F)$.
\end{proof}
 
\begin{theorem}[Bittner 2004]\label{thm:Bitt}
Let $F$ be a field of characteristic zero. We define an abelian group $\Gro^{\Bitt}(\Var_F)$ by means of the following presentation.
\begin{itemize}
\item {\em Generators:} isomorphism classes $[X]^{\Bitt}$ of connected smooth and proper $F$-schemes $X$;
\item {\em Relations:} $[\emptyset]^{\Bitt}=0$, and, whenever $X$ is a connected smooth and proper $F$-scheme and $Y$ is a connected smooth  closed subscheme of $X$,  
\begin{equation}\label{eq:blup}
[\mathrm{Bl}_YX]^{\Bitt}-[E]^{\Bitt}=[X]^{\Bitt}-[Y]^{\Bitt}
\end{equation}
where $\mathrm{Bl}_YX$ denotes the blow-up of $X$ along $Y$, and $E$ is the exceptional divisor.
\end{itemize}
For every smooth and proper $F$-scheme $X$, we set 
$$[X]^{\Bitt}=[X_1]^{\Bitt}+\ldots+[X_r]^{\Bitt}$$ where $X_1,\ldots,X_r$ are the connected components of $X$. We endow $\Gro^{\Bitt}(\Var_F)$ with the unique ring structure such that $[X]^{\Bitt}\cdot [X']^{\Bitt}=[X\times_F X']^{\Bitt}$ for all smooth and proper $F$-schemes $X$ and $X'$.

Then there exists a unique group morphism 
$$\Gro^{\Bitt}(\Var_F)\to \Gro(\Var_F)$$
that maps $[X]^{\Bitt}$ to $[X]$, for every smooth and proper $F$-scheme $X$. 
 This morphism is an isomorphism of rings.
\end{theorem}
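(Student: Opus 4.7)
My plan is to establish the three parts of the statement in sequence. First, to verify that the proposed ring structure on $\Gro^{\Bitt}(\Var_F)$ is well-defined, I would check that the product respects the blow-up relation \eqref{eq:blup}: for any connected smooth proper $F$-scheme $X'$, one has $\mathrm{Bl}_Y X \times_F X' \cong \mathrm{Bl}_{Y \times_F X'}(X \times_F X')$ with exceptional divisor $E \times_F X'$, so multiplying \eqref{eq:blup} by $[X']^{\Bitt}$ recovers a valid instance of the same relation.

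Second, to build the morphism $\alpha: \Gro^{\Bitt}(\Var_F) \to \Gro(\Var_F)$, I only need to check that the defining relations of $\Gro^{\Bitt}(\Var_F)$ hold in $\Gro(\Var_F)$. The blow-up relation follows from the scissor relations together with the isomorphism $\mathrm{Bl}_Y X \setminus E \cong X \setminus Y$ induced by the blow-up morphism:
$$[\mathrm{Bl}_Y X] - [E] = [\mathrm{Bl}_Y X \setminus E] = [X \setminus Y] = [X] - [Y].$$
That $\alpha$ preserves multiplication is built into its definition on generators. Surjectivity of $\alpha$ is immediate from Lemma \ref{lemm:Hir}, which expresses every class in $\Gro(\Var_F)$ as a $\Z$-linear combination of classes of smooth and proper $F$-schemes.

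The main obstacle is injectivity of $\alpha$, for which I will construct an inverse $\beta$. Motivated by formula \eqref{eq:compact}, I would send a smooth quasi-projective $F$-scheme $U$ to
$$\beta([U]) = \sum_{J \subset I}(-1)^{|J|}[E_J]^{\Bitt},$$
where $\overline{U}$ is any smooth compactification of $U$ whose boundary $\overline{U}\setminus U = \bigcup_{i\in I}E_i$ is a strict normal crossings divisor and $E_J = \bigcap_{j\in J}E_j$ (with $E_\emptyset = \overline{U}$). I then extend $\beta$ to an arbitrary finite-type $F$-scheme by partitioning into smooth quasi-projective locally closed pieces. The hard part is checking that this $\beta$ is well-defined: independent of the compactification, independent of the partition, and compatible with the scissor relations. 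Independence of the compactification is the crux and is where the Weak Factorization Theorem enters; it reduces the verification to a single blow-up of $\overline{U}$ along a smooth center meeting the boundary transversally, whereupon invariance of the alternating sum becomes a direct combinatorial computation using repeated applications of the blow-up relation \eqref{eq:blup}. Independence of the partition and compatibility with scissor relations then follow by refining any two partitions to a common one and invoking the compactification-independence. Finally, $\beta \circ \alpha = \mathrm{id}$ holds on the generators $[X]^{\Bitt}$ by taking $\overline{X} = X$ with empty boundary, and $\alpha \circ \beta = \mathrm{id}$ holds on every $[U]$ by formula \eqref{eq:compact}, completing the proof that $\alpha$ is a ring isomorphism.
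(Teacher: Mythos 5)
Your proposal follows essentially the same path as the paper's own proof sketch (and as Bittner's original argument): define $\alpha$ on generators, check it kills the blow-up relations using $\mathrm{Bl}_Y X\setminus E\cong X\setminus Y$, get surjectivity from Lemma~\ref{lemm:Hir}, and then construct the inverse $\beta$ via the alternating sum over boundary strata of a strict normal crossings compactification, with well-definedness coming from the Weak Factorization Theorem. One small imprecision: when you invoke weak factorization, the relevant elementary moves are blow-ups whose smooth centers are \emph{contained in} the boundary divisor and have normal crossings with all of its strata (so that the open piece $U$ is unchanged), not merely centers ``meeting the boundary transversally''; this is exactly the form used in the paper and is needed to ensure each intermediate compactification is again an snc-compactification of the same $U$.
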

\begin{proof}
This is the main part of Theorem 3.1 in \cite{bittner}. Let us briefly sketch the steps of the proof. The uniqueness and existence of the morphism are straightforward: the blow-up relations \eqref{eq:blup} are satisfied in 
$\Gro(\Var_F)$ because $\mathrm{Bl}_YX\setminus E$ is isomorphic to $X\setminus Y$. It is also clear that this is a morphism of rings. The surjectivity of the morphism follows from the fact that the group $\Gro(\Var_F)$ is generated by the classes of smooth and proper $F$-schemes, by Lemma \ref{lemm:Hir}.   The principal difficulty is showing that $\Gro^{\Bitt}(\Var_F)\to \Gro(\Var_F)$ is injective. This is achieved by constructing an inverse of this morphism.

 Since we can partition every $F$-scheme of finite type into separated smooth $F$-schemes $X$ of finite type, the essential step is to define the inverse on such schemes $X$ (of course, one needs to check that the final construction is independent of the choice of the partition).
 Let $\overline{X}$ be a strict normal crossings compactification of $X$ as in the proof of Lemma \ref{lemm:Hir}. 
 Then a straightforward calculation shows that the element
\begin{equation}\label{eq:Bittproof}
 \sum_{J\subset I}(-1)^{|J|}[E_J]^{\Bitt}
 \end{equation}
in $\Gro^{\Bitt}(\Var_F)$ is invariant under blow-ups of smooth centers in the boundary of $\overline{X}$ that have strict normal crossings with all the strata $E_J$. The Weak Factorization Theorem \cite{WF} implies that any two strict normal crossings compactifications of $X$ can be connected by a chain of such blow-ups and blow-downs.
 It follows that the element \eqref{eq:Bittproof} only depends on $X$, and not on the chosen strict normal crossings compactification $\overline{X}$. Now it is not hard to show that there exists a unique group morphism 
$\Gro(\Var_F)\to\Gro^{\Bitt}(\Var_F)$ that maps $[X]$ to the element \eqref{eq:Bittproof} for every separated smooth $F$-scheme $X$ of finite type. This morphism is inverse to the morphism $\Gro^{\Bitt}(\Var_F)\to \Gro(\Var_F)$.
\end{proof}
\begin{remark}
Bittner's presentation remains valid we replace ``proper'' by ``projective'', by the same proof. 
\end{remark}

\begin{theorem}[Larsen \& Lunts 2003]\label{thm:LL}
Let $F$ be a field of characteristic zero. Let $\SB_F$ be the set of stable birational equivalence classes $\{X\}_{\sbir}$ of integral\footnote{We follow the convention that integral schemes are non-empty.} $F$-schemes $X$ of finite type, and let $\Z[\SB_F]$ be the free abelian group on the set $\SB_F$. For every $F$-scheme $Y$ of finite type, we set $$\{Y\}_{\sbir}=\{Y_1\}_{\sbir}+\ldots+\{Y_r\}_{\sbir}$$ in $\Z[\SB_F]$, where $Y_1,\ldots,Y_r$ are the irreducible components of $Y$. In particular, $\{\emptyset\}_{\sbir}=0$. We endow $\Z[\SB_F]$ with the unique ring structure such that 
$$\{Y\}_{\sbir}\cdot \{Y'\}_{\sbir}=\{Y\times_F Y'\}_{\sbir}$$
for all $F$-schemes $Y$ and $Y'$ of finite type.

Then there exists a unique group morphism 
$$\sbir\colon \Gro(\Var_F)\to \Z[\SB_F]$$ that maps $[X]$ to $\{X\}_{\sbir}$ for every smooth and proper $F$-scheme $X$. The morphism $\sbir$ is a surjective ring morphism, and its kernel is the ideal in  $\Gro(\Var_F)$ generated by $\LL$.
\end{theorem}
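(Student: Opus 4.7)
My plan is to leverage Bittner's presentation (Theorem \ref{thm:Bitt}) throughout, which reduces all statements to checks on smooth and proper $F$-schemes.

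\textbf{Existence of $\sbir$.} By Theorem \ref{thm:Bitt}, it suffices to define $\sbir$ on the Bittner presentation: send each connected smooth proper $X$ to $\{X\}_{\sbir}$, and verify the blow-up relation. For a connected smooth proper $X$ with connected smooth closed center $Y$ of codimension $c$, the blow-up $\mathrm{Bl}_Y X$ is birational to $X$, hence stably birational, and the exceptional divisor $E$ is a $\PP^{c-1}$-bundle over $Y$, hence stably birational to $Y$. So the relation $[\mathrm{Bl}_Y X]^{\Bitt} - [E]^{\Bitt} = [X]^{\Bitt} - [Y]^{\Bitt}$ maps to $0 = 0$. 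Multiplicativity is immediate from $\{X\times_F X'\}_{\sbir}=\{X\}_{\sbir}\{X'\}_{\sbir}$ and Bittner's multiplicative structure.

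\textbf{Surjectivity.} Every integral $F$-scheme of finite type is birational to a smooth projective $F$-scheme (Hironaka), hence every generator $\{X\}_{\sbir}$ of $\Z[\SB_F]$ lies in the image of $\sbir$.

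\textbf{The class $\LL$ lies in the kernel.} We have $\LL = [\PP^1_F] - [\Spec F]$, and both $\PP^1_F$ and $\Spec F$ are smooth proper with $\{\PP^1_F\}_{\sbir} = \{\Spec F\}_{\sbir}$.

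\textbf{The kernel equals $(\LL)$.} This is the main point. I would construct an inverse to $\sbir$ modulo $\LL$. Consider the candidate map $\varphi\colon \Z[\SB_F] \to \Gro(\Var_F)/(\LL)$ defined on a stable birational class $\{X\}_{\sbir}$ by picking a smooth projective representative $X^{\mathrm{sp}}$ of this class and setting $\varphi(\{X\}_{\sbir}) = [X^{\mathrm{sp}}] \bmod \LL$. The key lemma is well-definedness: if $X_1$ and $X_2$ are smooth projective, connected, and stably birational, then $[X_1] \equiv [X_2] \pmod{\LL}$. Granting this, $\varphi$ is a well-defined ring morphism, and $\varphi \circ \overline{\sbir} = \mathrm{id}$ on the generators $[X]^{\Bitt}$ of $\Gro(\Var_F)/(\LL)$, forcing $\ker(\sbir) \subseteq (\LL)$.

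To prove the key lemma, I would first observe that for any Zariski-locally trivial $\PP^{c-1}$-bundle $P \to Y$ over a smooth proper $Y$, we have $[P] = [Y]\cdot [\PP^{c-1}] \equiv [Y] \pmod{\LL}$, since $[\PP^{c-1}]=1+\LL+\cdots+\LL^{c-1}$. Applied to the exceptional divisor $E$ of a blow-up of a smooth subvariety $Y \subset X$ (of codimension $c$), Bittner's relation $[\mathrm{Bl}_Y X] = [X] + [E] - [Y]$ yields $[\mathrm{Bl}_Y X] \equiv [X] \pmod{\LL}$. Now if $X_1$ and $X_2$ are birational smooth projective varieties, the Weak Factorization Theorem \cite{WF} connects them by a chain of blow-ups and blow-downs along smooth centers, so $[X_1] \equiv [X_2] \pmod{\LL}$. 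Finally, $[X \times_F \PP^n_F] = [X]\cdot [\PP^n_F] \equiv [X] \pmod{\LL}$, so the congruence extends from birational to stably birational pairs, completing the argument.

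\textbf{Main obstacle.} The only nontrivial step is the stable birational invariance of $[X] \bmod \LL$ for smooth projective $X$; this is where Weak Factorization is essential, since one cannot in general pass between birational smooth projective models by a single blow-up.
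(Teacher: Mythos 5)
Your proof is correct and follows essentially the same route as the paper: existence via Bittner's presentation and the blow-up relation, then constructing an inverse to $\overline{\sbir}$ by sending a stable birational class to the residue of a smooth projective representative modulo $\LL$, with well-definedness supplied by the projective-bundle computation plus Weak Factorization. The only cosmetic difference is that you phrase the invariance as a standalone "key lemma" and verify the identity $\varphi\circ\overline{\sbir}=\mathrm{id}$ on Bittner generators, whereas the paper states the same facts inline; the mathematical content is identical.
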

\begin{proof}
This is a combination of Theorem 2.3 and Proposition 2.7 in \cite{larsen-lunts}.
 The existence of $\sbir$ is an immediate consequence of Theorem \ref{thm:Bitt}\footnote{Theorem \ref{thm:LL} slightly predates Theorem \ref{thm:Bitt}, and in \cite{larsen-lunts}, the existence of $\sbir$ was deduced directly  from the Weak Factorization Theorem.}. Indeed, in the setting of \eqref{eq:blup} (and excluding the trivial case $Y=X$), $\mathrm{Bl}_Y X$ is birational to $X$, and $E$ is birational to $Y\times_F \PP^{\dim(X)-\dim(Y)-1}_F$. It is obvious that $\sbir$ is unique, and that it is a ring morphism.  
 
The morphism $\sbir$ maps $\LL=[\Pro^1_F]-[\Spec F]$ to $0$, because $\Spec F$ is stably birational to $\Pro^1_F$.  Thus $\sbir$ induces a ring morphism 
 $$\overline{\sbir}\colon \Gro(\Var_F)/\LL \Gro(\Var_F)\to \Z[\SB_F].$$
 We prove that this is an isomorphism by constructing an inverse. 
By Hironaka's resolution of singularities, every class in $\SB_F$ has a representative $X$ that is a connected smooth proper $F$-scheme.  
 For every $m\geq 0$, we have $$[X\times_F \Pro^m_F]-[X]=[X](\LL+\LL^2+\ldots+\LL^m)$$ in $\Gro(\Var_F)$ by the scissor relations. Thus $[X\times_F \Pro^m_F]$ and $[X]$ are congruent modulo $\LL$. Moreover, the congruence class of $[X\times_F \Pro^m_F]$ modulo $\LL$ is independent under blow-ups of smooth closed subschemes of $X\times_F \Pro^m_F$, because the exceptional divisor of such a blow-up is a projective bundle over the center. Now it follows from the Weak Factorization Theorem that the class of $X$ in $\Gro(\Var_F)/\LL\Gro(\Var_F)$ only depends on the stable birational equivalence class of $X$. 
 This yields a ring morphism 
 $$\Z[\SB_F]\to \Gro(\Var_F)/\LL\Gro(\Var_F)$$ that is inverse to $\overline{\sbir}$. 
\end{proof}

Beware that $\sbir([X])$ is usually different from $\{X\}_{\sbir}$ when $X$ is not smooth and proper. For instance, if $X$ is a nodal cubic in $\Pro^2_F$, then it follows from the scissor relations that $[X]=\LL$ in $\Gro(\Var_F)$. Thus $\sbir([X])=0$.

\begin{cor}\label{cor:LL}
Let $F$ be a field of characteristic zero, and let $X$ and $X'$ be smooth and proper $F$-schemes. Then $X$ and $X'$ are stably birational if and only if $[X]\equiv [X']$ modulo $\LL$ in $\Gro(\Var_F)$. 

In particular, $[X]\equiv c$ modulo $\LL$ for some integer $c$ if and only if every connected component of $X$ is stably rational; in that case, $c$ is the number of connected components of $X$.
\end{cor}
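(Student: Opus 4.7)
The plan is to reduce everything to Theorem \ref{thm:LL} by applying the Larsen--Lunts morphism $\sbir$. Since its kernel is the ideal $\LL\cdot\Gro(\Var_F)$, two classes in $\Gro(\Var_F)$ are congruent modulo $\LL$ if and only if they have the same image in $\Z[\SB_F]$. For smooth and proper $X$ the theorem identifies $\sbir([X])$ with $\{X\}_{\sbir}$, which by definition is $\sum_{i=1}^r \{X_i\}_{\sbir}$ where $X_1,\dots,X_r$ are the irreducible components of $X$; since $X$ is smooth, these are the same as the connected components.

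For the first assertion, I would therefore translate $[X]\equiv[X']$ modulo $\LL$ into the equality
\[
\sum_{i=1}^r \{X_i\}_{\sbir}=\sum_{j=1}^s \{X'_j\}_{\sbir}
\]
in the free abelian group $\Z[\SB_F]$. Because the generators $\{Y\}_{\sbir}$ indexed by elements of $\SB_F$ form a $\Z$-basis, this equality holds iff $r=s$ and, up to relabeling, the multisets of stable birational classes of components agree, i.e.\ there is a bijection $\sigma$ with $X_i$ stably birational to $X'_{\sigma(i)}$ for every $i$. This is exactly what it means for $X$ and $X'$ to be stably birational.

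For the second statement, taking $X'=(\Spec F)^{\sqcup c}$ in (a variant of) the above argument — or more directly, applying $\sbir$ to $[X]\equiv c\cdot 1$ — gives
\[
\sum_{i=1}^r\{X_i\}_{\sbir}=c\cdot\{\Spec F\}_{\sbir}
\]
in $\Z[\SB_F]$. Since each $\{X_i\}_{\sbir}$ is a basis vector and appears with positive coefficient, this equation forces $c\geq 0$, forces each $\{X_i\}_{\sbir}$ to equal $\{\Spec F\}_{\sbir}$ (that is, every connected component $X_i$ is stably rational), and identifies $c$ with the number of connected components $r$. Conversely, if every $X_i$ is stably rational then each $\{X_i\}_{\sbir}=\{\Spec F\}_{\sbir}$ and the same computation shows $[X]\equiv r$ modulo $\LL$.

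There is no real obstacle here: once Theorem \ref{thm:LL} is available, the corollary is essentially a bookkeeping exercise in the free abelian group $\Z[\SB_F]$. The only mild subtlety is remembering that $\{X\}_{\sbir}$ is defined as a sum over components rather than as a single basis element, so that both the bijection-of-components formulation in the first part and the positivity argument pinning down $c$ in the second part flow automatically from the freeness of $\Z[\SB_F]$.
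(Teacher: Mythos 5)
Your proof is correct and follows the same route the paper takes: both reduce to Theorem \ref{thm:LL} by writing $[X]$ as the sum of the classes of its connected components (which for a smooth scheme coincide with the irreducible components), then exploit that $\ker(\sbir)=(\LL)$ together with the freeness of $\Z[\SB_F]$ to read off the bijection of stable birational types and the value of $c$. You have simply spelled out the bookkeeping that the paper compresses into "the result follows immediately from Theorem \ref{thm:LL}."
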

\begin{proof}
By the scissor relations in the Grothendieck ring, we can write 
$$[X]=[X_1]+\cdots +[X_r]$$ where $X_1,\ldots,X_r$ are the connected components of $X$. Now the result follows immediately from Theorem \ref{thm:LL}.
\end{proof}

Corollary \ref{cor:LL} shows that the Grothendieck ring of varieties detects the stable birational type of smooth and proper schemes over fields of characteristic zero. The analogous question in positive characteristic is open. We are not aware of any example of a pair of smooth and proper schemes $X,\,X'$ over a field $F$ such that $[X]=[X']$ in $\Gro(\Var_F)$ and such that $X$ and $X'$ are not birational. Thus, to the best of our knowledge, it remains an open question whether the Grothendieck ring detects birational types of smooth and proper schemes (even in characteristic zero). To overcome this problem, we will introduce in Section \ref{ss:graded} a finer variant of the Grothendieck ring of varieties, graded by dimension.

\begin{remark}\label{rem:borisov}
 Corollary \ref{cor:LL} is false without the assumption that $X$ and $X'$ are smooth and proper.
  Borisov has constructed an example of two complex Calabi-Yau threefolds $Z$ and $Z'$ that are not birational and such that $[Z\times\A^6_{\C}]=[Z'\times\A^6_{\C}]$ in $\Gro(\Var_{\C})$ (see \cite{borisov} and the subsequent refinement in \cite{martin} and \cite{motbook}). Thus $Z\times_{\C} \A^6_{\C}$ and $Z'\times_{\C}\A^6_{\C}$ are smooth complex varieties that are not stably birational and that define the same class in the Grothendieck ring. 
\end{remark}

\subsection{The graded Grothendieck ring}\label{ss:graded}
Let $F$ be a field, and let $d$ be a non-negative integer. 
 We define the Grothendieck group $\Gro(\Var_F^{\leqslant d})$ of $F$-varieties of dimension at most $d$ to be the abelian group with the following presentation.
 \begin{itemize}
 \item {\em Generators:} isomorphism classes $[X]_d$ of $F$-schemes $X$ of finite type and of dimension at most $d$;
 \item {\em Relations:} whenever $X$ is an $F$-scheme of finite type and of dimension at most $d$, and $Y$ is a closed subscheme of $X$, then $$[X]_d=[Y]_d+[X\setminus Y]_d.$$
 \end{itemize}

 The Grothendieck group of varieties graded by dimension is the graded abelian group
 $$\Grodim(\Vardim_F)=\bigoplus_{d\geq 0}\Gro(\Var_F^{\leqslant d}).$$
It has a unique structure of a graded ring such that 
$$[X]_d\cdot [X']_e=[X\times_F X']_{d+e}$$ for all $F$-schemes $X$ and $X'$ of finite type and of dimensions at most $d$ and $e$, respectively. The identity element for the ring multiplication is the element $1=[\Spec F]_0$.
 With a slight abuse of notation, we will also use the symbol $\LL$ to denote the class $[\A^1_F]_1$ of the affine line in degree $1$. We set $\tau=[\Spec F]_1$, the class of the point in $\Gro(\Var_F^{\leqslant 1})$. The multiplicative action of $\tau$ on $\Grodim(\Vardim_F)$ shifts the degree: for every $F$-scheme $X$ of finite type of dimension at most $d$, and for every integer $e\geq 0$, we have $\tau^e [X]_d=[X]_{d+e}$.

The graded Grothendieck ring is related to the usual Grothendieck ring $\Gro(\Var_F)$ in the following way.

\begin{prop}\label{prop:grogro}
There exists a unique ring morphism 
$$\Grodim(\Vardim_F)\to \Gro(\Var_F)$$ that maps $[X]_d$ to $[X]$, for every non-negative integer $d$ and every $F$-scheme $X$ of finite type and of dimension at most $d$. In particular, it maps $\LL=[\A^1_F]_1$ to $\LL=[\A^1_F]$.
 This morphism is surjective, and its kernel is the ideal generated by $\tau-1$.
\end{prop}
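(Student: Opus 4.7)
The plan is to construct the morphism directly on generators, and then compute the kernel by exhibiting an inverse to the induced morphism on the quotient $\Grodim(\Vardim_F)/(\tau-1)$.

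For existence and uniqueness, I would proceed one graded piece at a time. For each $d \geq 0$, the assignment $[X]_d \mapsto [X]$ respects the defining scissor relations of $\Gro(\Var_F^{\leqslant d})$ verbatim (they become the scissor relations of $\Gro(\Var_F)$), hence extends to a group morphism $\Gro(\Var_F^{\leqslant d})\to \Gro(\Var_F)$. Taking the direct sum over $d$ produces a graded group morphism $\varphi\colon\Grodim(\Vardim_F)\to \Gro(\Var_F)$, and the product rule $[X]_d\cdot [X']_e=[X\times_F X']_{d+e}$ shows $\varphi$ is a ring morphism. Uniqueness is clear since the generators $[X]_d$ span the source as a group, and surjectivity is clear since every generator $[X]$ of $\Gro(\Var_F)$ is the image of $[X]_{\dim X}$.

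For the kernel, one inclusion is immediate: $\tau-1=[\Spec F]_1-[\Spec F]_0$ maps to $[\Spec F]-[\Spec F]=0$, so $(\tau-1)\subseteq \ker\varphi$. For the reverse inclusion, I would construct a ring morphism $\psi\colon\Gro(\Var_F)\to\Grodim(\Vardim_F)/(\tau-1)$ that inverts the morphism induced by $\varphi$ on the quotient, sending $[X]$ to the class of $[X]_{\dim X}$. The crucial observation making this well-defined is that modulo $\tau-1$ the class $[X]_d$ is independent of the choice of $d\geq \dim X$, since $[X]_{d+1}=\tau\,[X]_d\equiv [X]_d$. To verify that $\psi$ respects scissor relations, given a closed immersion $Y\hookrightarrow X$, I set $d=\dim X$; the scissor relation in $\Gro(\Var_F^{\leqslant d})$ reads $[X]_d=[Y]_d+[X\setminus Y]_d$, and reducing each term modulo $(\tau-1)$ identifies it with the prescribed image of $[X]$, $[Y]$, and $[X\setminus Y]$ respectively. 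Compatibility with products follows similarly from the additivity of dimension under products.

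The two ring morphisms $\varphi$ and $\psi$ agree with the identity on generators in both directions, so they are mutually inverse, and $\ker\varphi=(\tau-1)$ as claimed. The step requiring the most care is the scissor-relation verification for $\psi$, where one must keep track of the grading index against the true dimension, exploiting the fact that in the quotient the grading collapses entirely; once this is set up, the rest of the argument is formal.
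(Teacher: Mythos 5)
Your argument is correct and follows essentially the same route as the paper: construct the ring morphism on generators, observe $\tau-1$ lies in the kernel, and exhibit an inverse to the induced map on the quotient by sending $[X]$ to (the class of) $[X]_d$ for any $d\geq\dim X$. The only difference is that you spell out the well-definedness and scissor-relation check for the inverse, which the paper leaves implicit.
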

\begin{proof}
It is clear from the definitions of the Grothendieck rings that there is a unique ring morphism mapping $[X]_d$ to $[X]$. It is also obvious that it is surjective, and that its kernel contains $\tau-1$. Thus this morphism factors through a ring morphism 
$$\Grodim(\Vardim_F)/(\tau-1)\to \Gro(\Var_F).$$
This is an isomorphism: its inverse maps $[X]$ to $[X]_d$ for every $F$-scheme $X$ of finite type, where $d$ is any integer such that $d\geq \dim(X)$.
\end{proof}

\subsection{Birational types}
Let $F$ be a field, and let $d$ be a non-negative integer. We denote by $\Bir^d_F$ the set of birational equivalence classes of integral $F$-schemes $X$ of finite type of dimension $d$. The equivalence class of $X$ will be denoted by $\{X\}_{\bir}$. Let $\Z[\Bir_F^d]$ be the free abelian group on the set $\Bir_F^d$. We set $\Bir_F=\cup_{d\geq 0}\Bir_F^d$. This is the set of birational equivalence classes of integral $F$-schemes of finite type.
 We also introduce the graded abelian group 
 $$\Z[\Bir_F]=\bigoplus_{d\geq 0}\Z[\Bir_F^d].$$ It has a unique structure of a graded ring 
 such that $$\{X\}_{\bir}\cdot \{X'\}_{\bir}=\sum_{i=1}^r \{C_i\}_{\bir}$$
 for all integral $F$-schemes $X$ and $X'$ of finite type, where $C_1,\ldots,C_r$ are the irreducible components of $X\times_F X'$ (endowed with their induced reduced structures).
 For a field $F$ of characteristic zero, this graded ring was introduced by Kontsevich and Tschinkel in Section 2 of \cite{KT}; there it was called the {\em Burnside ring} of $F$.

\begin{prop}\label{prop:grobir}
There exists a unique morphism of graded rings 
$$\bir\colon \Grodim(\Vardim_F)\to \Z[\Bir_F]$$ 
such that $\bir([X]_d)=\{X\}_{\bir}$ when $X$ is an integral $F$-scheme of finite type of dimension $d$, and $\bir([X]_d)=0$ whenever $X$ is an $F$-scheme of finite type of dimension at most $d-1$. This morphism is surjective, and its kernel is the ideal generated by $\tau$.
\end{prop}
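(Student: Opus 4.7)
The plan is to define $\bir$ directly on generators by
$$\bir([X]_d)=\sum_i\{Z_i\}_{\bir},$$
where $Z_1,\ldots,Z_r$ are the irreducible components of $X$ of dimension exactly $d$, endowed with their reduced structure; in particular $\bir([X]_d)=0$ whenever $\dim X<d$. Uniqueness is forced by the prescribed values: using scissor relations, every generator $[X]_d$ can be reduced to a $\Z$-linear combination of classes of integral schemes of dimension $d$ and classes of schemes of dimension $<d$. The substantive step is checking compatibility with the scissor relation $[X]_d=[Y]_d+[X\setminus Y]_d$. The dim-$d$ irreducible components of $X$ fall into two disjoint classes: those contained in $Y$, which coincide with the dim-$d$ irreducible components of $Y$ (because $\dim Y\leq d$ forces any dim-$d$ closed irreducible subset of $Y$ to be a component of $X$); and those not contained in $Y$, which correspond bijectively to the dim-$d$ irreducible components of $U=X\setminus Y$ via $Z\mapsto Z\cap U$. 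Since $Z\cap U$ is a dense open subscheme of $Z$, this bijection preserves birational types. Gradedness holds by construction.

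For multiplicativity, suppose $X$ and $X'$ have dimensions at most $d$ and $e$, with dim-$d$ components $Z_i$ and dim-$e$ components $Z'_j$ respectively. Every dim-$(d+e)$ irreducible component of $X\times_F X'$ lies in some $Z_i\times_F Z'_j$; since each $Z_i$ and $Z'_j$ is integral of the maximal dimension, every irreducible component of $Z_i\times_F Z'_j$ has dimension exactly $d+e$, and the pairwise intersections of the $Z_i\times_F Z'_j$ have strictly smaller dimension. Hence
$$\bir([X\times_F X']_{d+e})=\sum_{i,j}\{Z_i\}_{\bir}\cdot\{Z'_j\}_{\bir}=\bir([X]_d)\cdot\bir([X']_e).$$
Surjectivity is immediate, since $\bir([X]_d)=\{X\}_{\bir}$ for $X$ integral of dimension $d$.

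It remains to identify $\ker\bir$. The containment $(\tau)\subseteq\ker\bir$ follows from $\bir(\tau)=\bir([\Spec F]_1)=0$. For the reverse inclusion I show that the induced graded ring morphism $\overline{\bir}\colon\Grodim(\Vardim_F)/(\tau)\to\Z[\Bir_F]$ is an isomorphism by constructing a two-sided group-theoretic inverse $s$. Define $s(\{X\}_{\bir})=[X]_d\bmod(\tau)$ for $X$ integral of dimension $d$, extended $\Z$-linearly. This is well-defined: if $X$ and $X'$ are birational integral $F$-schemes of dimension $d$, they share a common dense open subscheme $V$, and the complements $X\setminus V$ and $X'\setminus V$ have dimension less than $d$, so their classes in degree $d$ belong to $(\tau)$ via $[W]_d=\tau^{d-\dim W}[W]_{\dim W}$. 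The composition $\overline{\bir}\circ s$ is visibly the identity on generators. For $s\circ\overline{\bir}$, if $Z_1,\ldots,Z_r$ are the dim-$d$ components of $X$, then inclusion--exclusion in the Grothendieck ring yields
$$[X]_d\equiv\sum_{i=1}^r[Z_i]_d\pmod{(\tau)},$$
since $X\setminus\bigcup_i Z_i$ and each intersection $Z_i\cap Z_j$ with $i\neq j$ have dimension less than $d$; the right-hand side is precisely $s(\overline{\bir}([X]_d))$.

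The only genuinely technical point throughout is the component bookkeeping underlying the scissor check, the birational-invariance of $s$, and the product formula; none of these presents a serious obstacle once one tracks the top-dimensional components carefully.
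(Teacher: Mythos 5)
Your proof is correct and follows essentially the same route as the paper's: define $\bir$ on generators by summing birational classes of the top-dimensional irreducible components, verify the scissor relations and multiplicativity, get uniqueness from the observation that every generator reduces to classes of top-dimensional integral schemes plus lower-dimensional ones, and identify the kernel by constructing the inverse of the induced map $\Grodim(\Vardim_F)/(\tau)\to\Z[\Bir_F]$ via $\{X\}_{\bir}\mapsto[X]_{\dim X}\bmod(\tau)$. The only difference is that you fill in several checks that the paper leaves implicit (the component bookkeeping for the scissor relation, the product formula, the birational invariance of the inverse $s$, and the inclusion--exclusion showing $s\circ\overline{\bir}=\mathrm{id}$), all of which you carry out correctly.
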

\begin{proof}
Let $d$ be a non-negative integer, and let $X$ be an $F$-scheme of finite type and of dimension at most $d$. 
Let $X_1,\ldots,X_r$ be the irreducible components of $X$ of dimension $d$, endowed with their induced reduced structures.  
 We set $$\bir([X]_d)=\{X_1\}_{\bir}+\ldots +\{X_r\}_{\bir}.$$
This definition respects the scissor relations in $\Gro(\Var^{\leqslant d}_F)$, and therefore induces a morphism of graded groups 
$$\bir\colon \Grodim(\Vardim_F)\to \Z[\Bir_F].$$ 
It follows immediately from the definitions of the ring multiplications on the source and the target that $\bir$ is a morphism of graded rings. The uniqueness property in the statement follows from the fact that, by the scissor relations, the element  
$$[X]_d-[X_1]_d-\ldots -[X_r]_d$$ in $\Gro(\Var_F^{\leqslant d})$ can be written as a linear combination of classes $[Y]_d$ where $Y$ is an $F$-scheme of finite type of dimension at most $d-1$. 
 
It is obvious that the morphism $\bir$ is surjective, and that its kernel contains the ideal generated by $\tau$. Thus it induces a ring morphism 
 $$\Grodim(\Vardim_F)/(\tau)\to \Z[\Bir_F].$$ 
This is an isomorphism: its inverse maps $\{X\}_{\bir}$ to $[X]_{\dim(X)}$, for every 
 integral $F$-scheme $X$ of finite type. 
\end{proof}

\begin{cor}\label{cor:grobir}
If $X$ and $X'$ are reduced $F$-schemes of finite type of pure dimension $d$, then $X$ and $X'$ are birational if and only if  
$[X]_d\equiv [X']_d$ modulo $\tau$ in $\Grodim(\Vardim_F)$.  
\end{cor}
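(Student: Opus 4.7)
The plan is to reduce the statement to Proposition \ref{prop:grobir}: since $\bir$ induces a ring isomorphism $\Grodim(\Vardim_F)/(\tau) \xrightarrow{\sim} \Z[\Bir_F]$, the congruence $[X]_d\equiv [X']_d$ mod $\tau$ is equivalent to the equality $\bir([X]_d)=\bir([X']_d)$ in $\Z[\Bir_F^d]$. Because $X$ and $X'$ have pure dimension $d$, all of their irreducible components $X_1,\ldots,X_r$ and $X'_1,\ldots,X'_s$ have dimension $d$, so by the formula in the proof of Proposition \ref{prop:grobir} we have $\bir([X]_d)=\sum_i \{X_i\}_{\bir}$ and $\bir([X']_d)=\sum_j \{X'_j\}_{\bir}$. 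Since $\Z[\Bir_F^d]$ is free on $\Bir_F^d$, the equality of these sums is equivalent to the existence of a bijection $\sigma$ between the two sets of components with $X_i$ birational to $X'_{\sigma(i)}$ for all $i$. So the task reduces to showing that $X$ and $X'$ are birational (in the sense of the terminology section) if and only if such a $\sigma$ exists.

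For the easy direction, I first show that birationality of $X$ and $X'$ implies $[X]_d\equiv [X']_d$ mod $\tau$ directly. Choose dense open subschemes $U\subseteq X$ and $V\subseteq X'$ with $U\cong V$. By the scissor relations,
\[
[X]_d-[X']_d = [X\setminus U]_d - [X'\setminus V]_d.
\]
Since $X$ and $X'$ have pure dimension $d$, the closed subschemes $X\setminus U$ and $X'\setminus V$ have dimension at most $d-1$, so each class $[Z]_d$ with $\dim Z<d$ lies in $\tau\cdot\Grodim(\Vardim_F)$ because $[Z]_d=\tau\cdot [Z]_{d-1}$. Hence the right-hand side lies in $(\tau)$. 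In particular, applying $\bir$ gives the bijection $\sigma$ above.

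For the converse, suppose we have a bijection $\sigma$ with $X_i$ birational to $X'_{\sigma(i)}$. For each $i$, let $U_i= X_i\setminus\bigcup_{j\neq i}X_j$, which is a dense open subscheme of $X_i$ since $X_i$ is irreducible and meets each other component in a proper closed subset; similarly define $V_i\subseteq X'_i$. Each $U_i$ is open in $X$, disjoint from all other components, and birational to $V_{\sigma(i)}$. So I can choose dense opens $U'_i\subseteq U_i$ and $V'_{\sigma(i)}\subseteq V_{\sigma(i)}$ and an isomorphism $U'_i\cong V'_{\sigma(i)}$. Setting $U=\bigsqcup_i U'_i\subseteq X$ and $V=\bigsqcup_i V'_{\sigma(i)}\subseteq X'$ gives dense open subschemes of $X$ and $X'$ respectively, and the disjoint union of the component isomorphisms provides an isomorphism $U\cong V$. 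Thus $X$ and $X'$ are birational.

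The main obstacle is really just the converse construction: one needs to shrink to the locus where each component is separated from the others before taking open opens isomorphic to one another, so that the resulting partial isomorphisms glue on disjoint opens. Once that book-keeping is done, the whole argument is a direct application of Proposition \ref{prop:grobir} combined with the freeness of $\Z[\Bir_F^d]$.
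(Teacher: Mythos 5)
Your proposal is correct and takes essentially the same route as the paper: reduce to Proposition~\ref{prop:grobir} and use the explicit formula $\bir([X]_d)=\sum_i\{X_i\}_{\bir}$. The only difference is that you carefully verify the equivalence that the paper treats as immediate --- namely, that equality of $\bir([X]_d)$ and $\bir([X']_d)$ in the free abelian group $\Z[\Bir_F^d]$ (i.e.\ a bijection matching up birational types of the irreducible components) is indeed equivalent to birationality of the reducible schemes $X$ and $X'$ themselves --- and your construction of the common dense open by restricting to pairwise-disjoint opens on each component, together with the observation that a nowhere dense closed subset of a pure $d$-dimensional $F$-scheme of finite type has dimension at most $d-1$, is exactly the bookkeeping needed to make this precise.
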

\begin{proof}
If $X_1,\ldots,X_r$ are the irreducible components of $X$, then 
$$\bir([X]_d)=\{X_1\}_{\bir}+\ldots+\{X_r\}_{\bir}$$ in $\Z[\Bir_F]$, and the analogous property holds for $X'$. Thus $X$ and $X'$ are birational if and only if $\bir([X]_d)=\bir([X']_d)$. By Proposition \ref{prop:grobir}, this is equivalent to the property that
$[X]_d\equiv [X']_d$ modulo $\tau$ in $\Grodim(\Vardim_F)$.  
\end{proof}

Corollary \ref{cor:grobir} tells us that the graded version of the Grothendieck ring detects birational types; this is its main advantage over the classical Grothendieck ring of varieties. 

\begin{exam}
Let $Z$ and $Z'$ be the complex Calabi-Yau threefolds from Borisov's example (see Remark \ref{rem:borisov}). Then $Z\times_{\C}\A^6_{\C}$ and $Z'\times_{\C}\A^6_{\C}$ define the same class in $\Gro(\Var_{\C})$. However, since these varieties are not birational, it follows from Corollary \ref{cor:grobir} that their classes in $\Gro(\Var^{\leqslant 9}_{\C})$ are distinct. Thus the difference of these two classes lies in the kernel of the map
$\Grodim(\Vardim_{\C})\to \Gro(\Var_{\C})$ defined in Proposition \ref{prop:grogro}. 
\end{exam}

\begin{remark}
A different manifestation of the isomorphism in Proposition \ref{prop:grobir} also appears in Proposition 2.2 of \cite{KrTsch}.
\end{remark}

\subsection{A refinement of Bittner's presentation}\label{ss:dimBitt}
We will now establish an analog of Bittner's presentation (Theorem \ref{thm:Bitt}) for the graded Grothendieck ring $\Grodim(\Vardim_F)$, where $F$ is a field of characteristic zero. 

For every non-negative integer $d$, we define an abelian group $\Gro^{\Bitt}(\Var^{\leqslant d}_F)$ by means of the following presentation.
\begin{itemize}
\item {\em Generators:} isomorphism classes $[X]_d^{\Bitt}$ of connected smooth and proper $F$-schemes $X$ of dimension at most $d$;
\item {\em Relations:} $[\emptyset]^{\Bitt}_d=0$, and, whenever $X$ is a connected smooth and proper $F$-scheme of dimension at most $d$, and $Y$ is a connected smooth closed subscheme of $X$, then 
\begin{equation}\label{eq:dimblup}
[\mathrm{Bl}_YX]_d^{\Bitt}-[E]_d^{\Bitt}=[X]_d^{\Bitt}-[Y]_d^{\Bitt}
\end{equation}
where $\mathrm{Bl}_YX$ denotes the blow-up of $X$ along $Y$, and $E$ is the exceptional divisor.
\end{itemize}

\begin{theorem}\label{thm:dimBitt}
Let $F$ be a field of characteristic zero. 
For every non-negative integer $d$, there exists a unique group morphism 
$$\Gro^{\Bitt}(\Var^{\leqslant d}_F)\to \Gro(\Var^{\leqslant d}_F)$$
that maps $[X]_d^{\Bitt}$ to $[X]_d$, for every connected smooth and proper $F$-scheme $X$ of dimension at most $d$. 
 This morphism is an isomorphism.
\end{theorem}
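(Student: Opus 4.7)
My plan is to follow the strategy used for the classical Bittner theorem (Theorem \ref{thm:Bitt}) verbatim, checking at each step that all operations preserve the upper bound $d$ on the dimension.

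\textbf{Existence and uniqueness of the forward map.} Uniqueness is immediate because the elements $[X]_d^{\Bitt}$ for $X$ connected smooth and proper of dimension $\leq d$ generate $\Gro^{\Bitt}(\Var^{\leqslant d}_F)$. For existence, I need only check that the blow-up relation \eqref{eq:dimblup} holds in $\Gro(\Var_F^{\leqslant d})$: since $\mathrm{Bl}_Y X\setminus E$ is isomorphic to $X\setminus Y$ and all schemes involved have dimension at most $d$, the scissor relations in $\Gro(\Var^{\leqslant d}_F)$ give $[\mathrm{Bl}_Y X]_d-[E]_d=[X]_d-[Y]_d$ directly.

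\textbf{Surjectivity.} I need the graded analog of Lemma \ref{lemm:Hir}: the group $\Gro(\Var^{\leqslant d}_F)$ is generated by classes $[Y]_d$ where $Y$ is smooth and proper (of dimension at most $d$). The proof of Lemma \ref{lemm:Hir} carries over word for word, because Hironaka's resolution and the strict normal crossings compactification operate on a scheme $X$ of dimension $\leq d$ without increasing the dimension; every stratum $E_J$ that occurs has dimension $\leq \dim(X)\leq d$, so the identity \eqref{eq:compact} is valid in $\Gro(\Var^{\leqslant d}_F)$.

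\textbf{Injectivity via an inverse.} Following Bittner, for every separated smooth $F$-scheme $X$ of finite type and dimension $\leq d$, I would choose a smooth compactification $\overline{X}$ whose boundary $\overline{X}\setminus X$ is a strict normal crossings divisor with prime components $E_i,\,i\in I$, and define
\[
\Phi(X) \;=\; \sum_{J\subset I}(-1)^{|J|}\,[E_J]_d^{\Bitt}\;\in\;\Gro^{\Bitt}(\Var^{\leqslant d}_F).
\]
Each $E_J$ is smooth and proper of dimension $\dim(X)-|J|\leq d$, so this lives in the correct group. By the Weak Factorization Theorem, any two strict normal crossings compactifications are connected by a chain of blow-ups and blow-downs along smooth centers transverse to the boundary, and one verifies as in Bittner that \eqref{eq:dimblup} forces $\Phi(X)$ to be independent of the chosen compactification; again, the centers of these blow-ups have dimension at most $d-1$ so everything stays inside $\Gro^{\Bitt}(\Var^{\leqslant d}_F)$. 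One then checks that $\Phi$ extends to a well-defined group morphism $\Gro(\Var^{\leqslant d}_F)\to\Gro^{\Bitt}(\Var^{\leqslant d}_F)$ respecting the scissor relations (using that every $F$-scheme of finite type of dimension $\leq d$ can be partitioned into separated smooth subschemes of dimension $\leq d$), and that $\Phi$ is inverse to the map in the statement, since on a smooth proper connected $X$ of dimension $\leq d$ the boundary is empty and only the term $J=\emptyset$ survives.

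\textbf{Main obstacle.} There is no genuinely new difficulty beyond the ordinary Bittner theorem; the whole burden of the proof is already established in \cite{bittner} and in the proof of Theorem \ref{thm:Bitt} sketched above. The one thing that requires constant vigilance is ensuring that every auxiliary construction (resolution, compactification, blow-up, stratum $E_J$) keeps the dimensions bounded by $d$, so that the argument lives entirely within the truncated groups $\Gro(\Var^{\leqslant d}_F)$ and $\Gro^{\Bitt}(\Var^{\leqslant d}_F)$. Since blow-ups preserve dimension, resolutions preserve dimension, and intersecting more boundary components only decreases dimension, this verification is routine at every step.
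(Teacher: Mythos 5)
Your proposal is correct and matches the paper's proof exactly: the paper simply states that one copies Bittner's argument verbatim, observing that every scheme occurring in the argument has dimension at most $d$. Your write-up spells out the same observation step by step (forward map, surjectivity via the graded analog of Lemma \ref{lemm:Hir}, inverse via Weak Factorization), with the key point being precisely the one the paper emphasizes — that resolution, compactification, blow-ups, and boundary strata never increase dimension.
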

\begin{proof}
One can simply copy the proof of Theorem 3.1 in \cite{bittner}: all the schemes involved in the 
argument have dimension at most $d$ (see Theorem \ref{thm:Bitt} for a sketch of the proof).
\end{proof}

\subsection{A refinement of the theorem of Larsen \& Lunts}
We can also prove a refinement of the theorem of Larsen and Lunts (Theorem \ref{thm:LL}) in the graded setting. This refinement will not be used in the remainder of the paper, because we already know from Corollary \ref{cor:grobir} that the graded Grothendieck ring detects birational types.

Let $F$ be a field, and let $d$ be a non-negative integer. Let $X$ and $X'$ be irreducible $F$-schemes of finite type such that $\dim(X)\leq d$ and $\dim(X')\leq d$.
 We say that $X$ and $X'$ are {\em $d$-stably birational} if $X\times_F \Pro^{d-\dim(X)-1}_F$ is birational to  $X'\times_F \Pro^{d-\dim(X')-1}_F$. Here and below, we use the convention that $\Pro^{-1}_F=\Spec F$. See Remark \ref{rem:LL} for a comment on the appearance of the term $-1$ in the dimensions of the projective spaces. Note that, if $d=\dim(X)=\dim(X')$ or $d-1=\dim(X)=\dim(X')$, then $X$ and $X'$ are $d$-stably birational if and only if they are birational.

 We denote by $\SB^{\leqslant d}_F$ the set of $d$-stable birational equivalence classes of  integral $F$-schemes $X$ of finite type of dimension at most $d$. 
 The equivalence class of $X$ will be denoted by $\{X\}_{\sbir,d}$. 
 Let $\Z[\SB^{\leqslant d}_F]$ be the free abelian group on the set $\SB^{\leqslant d}_F$.
  For every $F$-scheme $Y$ of finite type of dimension at most $d$, we set 
  $$\{Y\}_{\sbir,d}=\{Y_1\}_{\sbir,d}+\ldots+\{Y_r\}_{\sbir,d}$$
  in $\Z[\SB^{\leqslant d}_F]$, where $Y_1,\ldots,Y_r$ are the irreducible components of $Y$ (with their induced reduced structures). In particular, $\{\emptyset\}_{\sbir,d}=0$. 
 We consider the graded abelian group
$$\Z[\SBdim{F}]=\bigoplus_{d\geq 0}\Z[\SB^{\leqslant d}_F].$$
and endow it with the unique structure of a graded ring 
 such that $$\{Y\}_{\sbir,d}\cdot \{Y'\}_{\sbir,e}=\{Y\times_F Y'\}_{\sbir,d+e}$$
whenever $d$ and $e$ are non-negative integers, and $Y$ and $Y'$ are $F$-schemes of finite type of dimensions at most $d$ and $e$, respectively.

\begin{lemma}\label{lemm:LL}
For all integers $d$ and $n$ such that $d\geq n\geq 0$ and $(d,n)\neq (1,1)$, we have 
$[\Pro^n_F]_d\equiv \tau^d$ modulo $[\A^1_F]_2$ in $\Grodim(\Vardim_F)$.
\end{lemma}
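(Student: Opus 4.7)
My plan is to compute $[\mathbb{P}^n_F]_d$ explicitly via the standard affine stratification of projective space. The scissor decomposition $\mathbb{P}^n_F = \bigsqcup_{k=0}^{n}\mathbb{A}^k_F$, combined with the identities $[\mathbb{A}^k_F]_k = \LL^k$ and $\tau^e[X]_k = [X]_{k+e}$, yields
\[
[\mathbb{P}^n_F]_d \;=\; \sum_{k=0}^{n}[\mathbb{A}^k_F]_d \;=\; \sum_{k=0}^{n}\tau^{d-k}\LL^k
\]
in $\Gro(\Var_F^{\leqslant d})$, so that
\[
[\mathbb{P}^n_F]_d - \tau^d \;=\; \sum_{k=1}^{n}\tau^{d-k}\LL^k.
\]
The task is then to show that every term on the right lies in the ideal $([\mathbb{A}^1_F]_2)=(\tau\LL)\subset \Grodim(\Vardim_F)$.

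The key observation is the elementary factorization, valid whenever $k\geq 1$ and $d-k\geq 1$:
\[
\tau^{d-k}\LL^k \;=\; \bigl(\tau^{d-k-1}\LL^{k-1}\bigr)\cdot(\tau\LL) \;=\; \bigl(\tau^{d-k-1}\LL^{k-1}\bigr)\cdot[\mathbb{A}^1_F]_2,
\]
which manifestly exhibits the term as a multiple of $[\mathbb{A}^1_F]_2$. The hypothesis $d\geq n$ ensures all exponents $d-k$ are nonnegative, and the exclusion $(d,n)\neq(1,1)$ rules out the one degenerate scenario in which a surviving summand would be $\LL$ itself---a degree-$1$ element, which cannot possibly belong to the (degree-$\geq 2$) ideal generated by $\tau\LL$.

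The principal bookkeeping subtlety concerns the boundary terms where $d-k = 0$. My plan to manage these is by induction on $n$, exploiting the recursion
\[
[\mathbb{P}^n_F]_d \;=\; \tau^{d-n}\LL^n + \tau\cdot[\mathbb{P}^{n-1}_F]_{d-1}
\]
coming from the scissor decomposition $\mathbb{P}^n_F = \mathbb{A}^n_F \sqcup \mathbb{P}^{n-1}_F$. Combined with the inductive hypothesis on $[\mathbb{P}^{n-1}_F]_{d-1}$, this reduces the general statement to the base cases $n = 0$ (trivial, since $[\mathbb{P}^0_F]_d = \tau^d$) and $n = 1$ with $d\geq 2$ (where $[\mathbb{P}^1_F]_d - \tau^d = \tau^{d-1}\LL = \tau^{d-2}\cdot(\tau\LL)$ is directly in the ideal). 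The main obstacle is thus the careful term-by-term verification that the factorization above covers every summand allowed by the hypotheses $d\geq n\geq 0$ and $(d,n)\neq(1,1)$.
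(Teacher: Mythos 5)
Your plan follows the same route as the paper: stratify $\Pro^n_F$ by affine cells and extract a factor of $\tau\LL=[\A^1_F]_2$ from each non\nobreakdash-constant term. However, the handling of the boundary case $d=n$ is a genuine gap, and it cannot be filled, because the lemma \emph{as stated} is actually false for $d=n\geq 2$. After applying your recursion and the inductive hypothesis to $\tau\cdot[\Pro^{n-1}_F]_{d-1}$, the term $\tau^{d-n}\LL^n$ survives, and for $d=n$ this is $\LL^n=[\A^n_F]_n$. This class does \emph{not} lie in $(\tau\LL)$: by Proposition~\ref{prop:grobir} we have $(\tau\LL)\subset(\tau)=\ker(\bir)$, while $\bir([\A^n_F]_n)=\{\Pro^n_F\}_{\bir}\neq 0$. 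Concretely, $[\Pro^2_F]_2-\tau^2=\tau\LL+\LL^2\notin(\tau\LL)$, so the conclusion fails already at $(d,n)=(2,2)$. Your factorization $\tau^{d-k}\LL^k=\bigl(\tau^{d-k-1}\LL^{k-1}\bigr)\cdot(\tau\LL)$ genuinely requires $d-k\geq 1$ for every $k\leq n$, i.e.\ $d\geq n+1$; the single exclusion $(d,n)\neq(1,1)$ in the hypothesis does not capture this, since the surviving obstruction for $d=n\geq 2$ is $\LL^n$ rather than $\LL$, and it is no more in the ideal than $\LL$ is (the ideal's degree-$d$ part is represented by classes of schemes of dimension $<d$).

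For what it is worth, the paper's own proof carries the same implicit restriction: the factor $[\A^{n-1}_F]_{d-2}$ appearing there is only a legitimate generator of $\Gro(\Var_F^{\leqslant d-2})$ when $d-2\geq n-1$, i.e.\ $d\geq n+1$. The lemma is only ever invoked with $d=n+1$ (in the proof of Theorem~\ref{thm:dimLL}, the projective spaces $\Pro^{d-\dim(Z)-1}_F$ and $\Pro^{d-\dim(X)-1}_F$ are taken in degree one above their dimension), so the correct hypothesis is ``$d\geq n+1$, or $n=0$''. Under that hypothesis your term-by-term factorization already covers every summand, the auxiliary induction becomes unnecessary, and your argument coincides with the paper's.
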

\begin{proof}
 If $n=0$ then the assertion is trivial; thus we may assume that $n\geq 1$ and $d\geq 2$.  
The scissor relations imply that 
$$[\Pro^n_F]_d=[\Spec k]_d+[\A^1_F]_d+\ldots +[\A^n_F]_d.$$
Thus we can write 
$$[\Pro^n_F]_d-\tau^d=[\A^1_F]_2(\tau^{d-2}+\ldots +[\A^{n-1}_F]_{d-2})$$
in $\Grodim(\Vardim_F)$.
\end{proof}
\begin{theorem}\label{thm:dimLL}
Let $F$ be a field of characteristic zero.  Then there exists a unique morphism of graded rings  
$$\sbir_{\dim}\colon \Grodim(\Vardim_F)\to \Z[\SBdim{F}]$$
such that $\sbir_{\dim}([X]_d)=\{X\}_{\sbir,d}$ for every non-negative integer $d$ and every smooth and proper $F$-scheme $X$ of dimension at most $d$.

 The morphism $\sbir_{\dim}$ is surjective, and its kernel is the ideal generated by $\tau\LL=[\A^1_F]_2$.  
\end{theorem}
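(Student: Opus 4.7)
The plan is to mimic the proof of Theorem~\ref{thm:LL}, replacing the classical Bittner presentation with the graded version in Theorem~\ref{thm:dimBitt} and the ideal $(\LL)$ by $(\tau\LL)$.

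For existence and uniqueness, I apply Theorem~\ref{thm:dimBitt} and define $\sbir_{\dim}$ on graded Bittner generators: send $[X]^{\Bitt}_d$ to $\{X\}_{\sbir,d}$, where $X$ is connected (hence integral) smooth and proper of dimension at most $d$. Compatibility with the blow-up relation \eqref{eq:dimblup} reduces, via the birational equivalence $\mathrm{Bl}_Y X \dashrightarrow X$ of same-dimensional schemes, to the identity $\{E\}_{\sbir,d}=\{Y\}_{\sbir,d}$. This holds because the exceptional divisor $E = \PP(N_{Y/X})$ is birational to $Y \times_F \Pro^{\dim X - \dim Y - 1}_F$, so $E \times_F \Pro^{d-\dim X}_F$ is birational to $Y \times_F \Pro^{d-\dim Y - 1}_F$, using that any product of projective spaces of total dimension $n$ is birational to $\Pro^n_F$. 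Uniqueness and multiplicativity are immediate.

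Surjectivity follows from Hironaka (every $\{X\}_{\sbir,d}$ admits a smooth proper integral representative). The containment $(\tau\LL) \subseteq \ker \sbir_{\dim}$ reduces by scissor to the $2$-stable birational equivalence of $\Pro^1_F$ and $\Spec F$, which is trivial.

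The core of the proof is the reverse inclusion $\ker \sbir_{\dim} \subseteq (\tau\LL)$, for which I construct an inverse
\[
\Phi \colon \Z[\SBdim F] \to \Grodim(\Vardim_F)/(\tau\LL), \qquad \{X\}_{\sbir,d} \mapsto [X'']_d \bmod \tau\LL,
\]
where $X''$ is a smooth proper integral model of $X$ supplied by Hironaka. The crucial step is well-definedness: if $X,X'$ are smooth proper integral of dimensions $m,m' \leq d$ and $d$-stably birational, then $[X]_d \equiv [X']_d \bmod \tau\LL$. Since $X \times_F \Pro^{d-m-1}_F$ and $X' \times_F \Pro^{d-m'-1}_F$ are birational smooth proper integral schemes, the Weak Factorization Theorem reduces the claim to showing that smooth blow-ups preserve classes modulo $\tau\LL$. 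For a blow-up $\mathrm{Bl}_Z W \to W$ with $Z \subsetneq W$ smooth connected and $r = \dim W - \dim Z - 1$, the projective-bundle stratification of $E = \PP(N_{Z/W})$ as $Z \times_F \A^i_F$ for $i = 0,\ldots,r$ yields
\[
[E]_d - [Z]_d \;=\; [Z]_{\dim Z} \cdot \sum_{i=1}^r \tau^{d - \dim Z - i}\LL^i,
\]
which lies in $(\tau\LL)$ because each $\tau$-exponent is at least $d - \dim W + 1 \geq 1$. Combined with Lemma~\ref{lemm:LL} (applied with $d$ there equal to $d-m$ and $n$ equal to $d-m-1$), which yields $[X \times_F \Pro^{d-m-1}_F]_d = [X]_m \cdot [\Pro^{d-m-1}_F]_{d-m} \equiv [X]_d \bmod \tau\LL$, and the analogous identity for $X'$, one obtains $[X]_d \equiv [X']_d \bmod \tau\LL$. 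The routine verification that $\Phi$ is a ring morphism inverse to the map induced by $\sbir_{\dim}$ (using that a smooth proper scheme is the disjoint union of its irreducible components) then completes the argument. The principal obstacle is the blow-up computation in the graded Grothendieck ring, which requires carefully tracking the $\tau$-powers introduced by the projective-space factors in the definition of $d$-stable birationality.
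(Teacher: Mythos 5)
Your proposal is correct and follows essentially the same strategy as the paper's proof: use Theorem~\ref{thm:dimBitt} for existence/uniqueness (checking the blow-up relation via the birational identification $E \sim Y\times_F\Pro^{\dim X-\dim Y-1}_F$), and prove the kernel statement by constructing an inverse on $\Grodim(\Vardim_F)/(\tau\LL)$ via Weak Factorization, reducing to the invariance of $[Y]_d$ under smooth blow-ups modulo $\tau\LL$. The only cosmetic difference is that you unwind the key divisibility directly from the affine cell decomposition $[\Pro^r_F]_{d-\dim Z}-\tau^{d-\dim Z}=\sum_{i=1}^r\tau^{d-\dim Z-i}\LL^i$ and bound the $\tau$-exponents, whereas the paper packages this as an invocation of Lemma~\ref{lemm:LL}; you also correctly use the actual bundle rank $r=\dim W-\dim Z-1$, where the paper's displayed formula contains a harmless slip writing $d-\dim Z-1$ for the exponent of the projective space.
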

\begin{proof}
The existence and uniqueness of the morphism $\sbir_{\dim}$ follow immediately from Theorem \ref{thm:dimBitt}; note that, in the blow-up relations \eqref{eq:dimblup} (and excluding the trivial case $Y=X$), the exceptional divisor $E$ is birational to $Y\times_F \Pro^{\dim(X)-\dim(Y)-1}$, so that $E$ and $Y$ are $\dim(X)$-stably birational. The kernel of $\sbir_{\dim}$ contains $[\A^1_F]_2=[\Pro^1_F]_2-\tau^2$, because 
$\Pro^1_F$ and $\Spec F$ are $2$-stably birational. Thus $\sbir_{\dim}$ induces a morphism of graded rings 
$$\overline{\sbir}_{\dim}\colon \Grodim(\Vardim_F)/([\A^1_F]_2)\to \Z[\SBdim{F}].$$
We will show that this is an isomorphism by constructing its inverse. 

 For every smooth and proper $F$-scheme $Y$ and every integer $d\geq \dim(Y)$,
  the residue class of $[Y]_d$ in $\Grodim(\Vardim_F)/([\A^1_F]_2)$ is invariant under any blow-up of a smooth strict closed subscheme $Z$ in $Y$. Indeed, we may assume that the codimension of $Z$ in $Y$ is at least $2$; if we denote by $E$ the exceptional divisor 
  in the blow-up $\mathrm{Bl}_Z Y$ of $Y$ at $Z$, then the scissor relations imply that 
$$\begin{array}{lcl}
[\mathrm{Bl}_Z Y]_d-[Y]_d&=&[E]_d-[Z]_d
\\ &=& [Z]_{\dim(Z)}([\Pro^{\dim(Y)-\dim(Z)-1}_F]_{d-\dim(Z)}-\tau^{d-\dim(Z)})
\end{array}$$ 
in $\Grodim(\Vardim_F)$, and this element is divisible by $[\A^1_F]_2$ by Lemma \ref{lemm:LL}.
    Therefore, by the Weak Factorization Theorem \cite{WF}, the residue class of $[Y]_d$ in $\Grodim(\Vardim_F)/([\A^1_F]_2)$ only depends on the birational equivalence class of $Y$.
  
Now let $X$ be a non-empty connected smooth and proper $F$-scheme, and let $d$ be an integer such that $d\geq \dim(X)$.  
   Then the residue class of $[X\times_F \Pro^{d-\dim(X)-1}_F]_d$ in  $\Grodim(\Vardim_F)/([\A^1_F]_2)$ only depends on the birational equivalence class of $X\times_F \Pro^{d-\dim(X)-1}_F$; in other words, it only depends on the
    $d$-stable birational equivalence class of $X$. 
    But we also have $[X\times_F \Pro^{d-\dim(X)-1}_F]_d=[X]_d$ in 
     $\Grodim(\Vardim_F)/([\A^1_F]_2)$ by Lemma \ref{lemm:LL}.
      Thus we obtain a morphism of graded abelian groups 
  $$\Z[\SBdim{F}]\to \Grodim(\Vardim_F)/([\A^1_F]_2)\colon \{X\}_{\sbir,d}\mapsto [X]_d$$
that is inverse to $\overline{\sbir}_{\dim}$.
\end{proof}

\begin{cor}\label{cor:dimLL}
Let $F$ be a field of characteristic zero. 
\begin{enumerate}
\item Let $X$ and $X'$ be connected smooth and proper $F$-schemes.
Let $d$ be an integer such that $d\geq \dim(X)$ and $d\geq \dim(X')$. Then $X$ and $X'$ are $d$-stably birational if and only if $[X]_d\equiv [X']_d$ modulo $[\A^1_F]_2$ in $\Grodim(\Vardim_F)$. 

\item Let $X$ be a smooth and proper $F$-scheme. Then $[X]\equiv c$ modulo $[\A^1_F]_2$ for some integer $c$ if and only if, for every connected component $C$ of $X$, the scheme $C\times_F \Pro^{d-\dim(C)-1}_F$ is rational; in that case, $c$ is the number of connected components of $X$.
\end{enumerate}
\end{cor}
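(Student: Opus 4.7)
The plan is to derive both parts of the corollary as direct consequences of Theorem \ref{thm:dimLL}, by transporting the congruences across the graded ring morphism $\sbir_{\dim}$, whose kernel is precisely the principal ideal $([\A^1_F]_2)$.

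For part (1), since $X$ and $X'$ are connected smooth and proper $F$-schemes, they are in particular irreducible, so the defining formula for $\sbir_{\dim}$ gives $\sbir_{\dim}([X]_d)=\{X\}_{\sbir,d}$ and $\sbir_{\dim}([X']_d)=\{X'\}_{\sbir,d}$. Theorem \ref{thm:dimLL} then tells us that $[X]_d\equiv [X']_d$ modulo $[\A^1_F]_2$ if and only if $\{X\}_{\sbir,d}=\{X'\}_{\sbir,d}$ in $\Z[\SB^{\leqslant d}_F]$, which is by definition equivalent to $X$ and $X'$ being $d$-stably birational.

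For part (2), I would write $X=C_1\sqcup\cdots\sqcup C_r$ for the decomposition into connected components, so that the scissor relations give $[X]_d=\sum_{i=1}^{r}[C_i]_d$. I interpret the integer $c$ appearing in the statement as $c\tau^d=c[\Spec F]_d$, so that both sides of the congruence lie in degree $d$ of the graded ring $\Grodim(\Vardim_F)$. Applying $\sbir_{\dim}$, the congruence $[X]_d\equiv c$ modulo $[\A^1_F]_2$ becomes the equation $\sum_{i=1}^{r}\{C_i\}_{\sbir,d}=c\,\{\Spec F\}_{\sbir,d}$ in the free abelian group $\Z[\SB^{\leqslant d}_F]$. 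Since the $d$-stable birational classes form a $\Z$-basis, this forces $\{C_i\}_{\sbir,d}=\{\Spec F\}_{\sbir,d}$ for every $i$, and $c=r$. Unpacking $d$-stable birationality, the first condition is equivalent to the scheme $C_i\times_F \Pro^{d-\dim(C_i)-1}_F$ being birational to $\Spec F\times_F \Pro^{d-1}_F=\Pro^{d-1}_F$, i.e.\ rational, as claimed.

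The argument involves no genuine obstacle beyond careful bookkeeping: the kernel computation in Theorem \ref{thm:dimLL} does all of the heavy lifting. The only subtle point is the degree-matching in part (2), which forces one to read the integer $c$ as $c\tau^d$ rather than as an element of degree zero, so that the congruence is a meaningful identity inside the graded quotient $\Grodim(\Vardim_F)/([\A^1_F]_2)$.
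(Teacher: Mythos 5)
Your argument is correct and is exactly the content of the paper's one-line proof (``this follows immediately from Theorem \ref{thm:dimLL}''), just written out: you transport the congruence across $\sbir_{\dim}$, whose kernel is $([\A^1_F]_2)$, and then read off the conclusion from equality of basis elements in the free abelian group $\Z[\SB^{\leqslant d}_F]$. The degree-matching observation in part (2) — reading $c$ as $c\tau^d$ — is the right way to make sense of the statement's slightly loose notation.
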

\begin{proof}
This follows immediately from Theorem \ref{thm:dimLL}.
\end{proof}

\begin{remark}\label{rem:LL}
One can also formulate a  weaker version of Theorem \ref{thm:dimLL}, replacing the exponents $d-\dim(X)-1$ and $d-\dim(X')-1$ in the definition of $d$-stable birational equivalence by $d-\dim(X)$ and $d-\dim(X')$. With that definition, the morphism $\sbir_{\dim}$ constructed in Theorem \ref{thm:dimLL} has kernel generated by $\LL=[\A^1_F]_1$, rather than $[\A^1_F]_2$.    
\end{remark}

\section{Dimensional refinement of the motivic volume}\label{sec:motvol}
\subsection{The motivic volume}
Let $k$ be a field of characteristic zero, and set 
$$K=\bigcup_{n>0}k\llpar t^{1/n}\rrpar,\quad R=\bigcup_{n>0}k\llbr t^{1/n}\rrbr.$$
The field $K$ is a henselian valued field with valuation ring $R$ with respect to the $t$-adic valuation 
$$\mathrm{ord}_t\colon K^{\times}\to \Q.$$ If $k$ is algebraically closed then $K$ is an algebraic closure of the Laurent series field $k\llpar t\rrpar$, but we do not make this assumption.

 In \cite{NiSh}, Shinder and the first-named author constructed a ring morphism 
 $$\Vol\colon \Gro(\Var_K)\to \Gro(\Var_k),$$ called the {\em motivic volume}. It maps $[\A^1_K]$ to $[\A^1_k]$ and has the property that for every smooth and proper $R$-scheme $\cX$, one has $\Vol([\cX_K])=[\cX_k]$. This ring morphism can be viewed as a refinement of the motivic nearby fiber of Denef and Loeser \cite{DL}, where the refinement consists of the fact that we do not need to invert $[\A^1_k]$ in the target; this is crucial for applications to rationality problems. If $k$ is algebraically closed, then the existence of the morphism $\Vol$ follows immediately from the work of Hrushovski and Kazhdan \cite{HK}. 
 
 The fact that $\Vol([\A^1_K])=[\A^1_k]$ implies that the morphism $\Vol$ factors through a ring morphism 
 $\Gro(\Var_K)/\LL\to \Gro(\Var_k)/\LL$ which, by Theorem \ref{thm:LL}, we can identify with a ring morphism $$\Volsb\colon \Z[\SB_K]\to \Z[\SB_k].$$ If $\cX$ is a connected smooth and proper $R$-scheme, then $\Volsb$ maps the stable birational equivalence class of $\cX_K$ to that of $\cX_k$. Since $\Vol([\Spec K])=[\Spec k]$, the morphism $\Volsb$ maps the class of stably rational $K$-varieties to the class of stably rational $k$-varieties. It then follows easily that stable rationality of geometric fibers specializes in smooth and proper families, which was one of the main results of \cite{NiSh}. 

 The main purpose of the present article is to upgrade the motivic volume to a morphism of graded rings 
 $$\Vol\colon \Grodim(\Vardim_K)\to \Grodim(\Vardim_k)$$ that maps $[\Spec K]_1$ to $[\Spec k]_1$ and fits into a a commutative diagram 
$$\begin{CD} 
\Grodim(\Vardim_K)@>\Vol>> \Grodim(\Vardim_k)
\\ @VVV @VVV
\\ \Gro(\Var_K)@>>\Vol>\Gro(\Var_k) 
\end{CD}$$
where the vertical morphims are the forgetful maps from Proposition \ref{prop:grogro}. The refined volume induces a morphism of graded rings $$\Grodim(\Vardim_K)/([\Spec K]_1)\to \Grodim(\Vardim_k)/([\Spec k]_1)$$ which, by Proposition \ref{prop:grobir}, can be identified with a morphism of graded rings 
$$\Volbir\colon \Z[\Bir_K]\to \Z[\Bir_k].$$ This is precisely the birational specialization morphism from \cite{KT}.

 The construction of the motivic volume in \cite{NiSh} was phrased in the language of logarithmic geometry. For readers familiar with that language it will be an easy exercise to check that all the arguments in \cite{NiSh} still apply to the dimensional refinement of the Grothendieck ring. Therefore, we decided not to reproduce the proofs here, but to explain the main properties in a more explicit way, avoiding as much as possible the language of logarithmic schemes. We hope that this will make the theory more user-friendly. It also makes our formula for the motivic volume a bit more general, because our class of strictly toroidal models (defined below) is slightly more general than the log smooth models that were considered in \cite{NiSh}, since one does not need to assume that the log structure is defined globally on the model. This will be explained in more detail in the proof of Theorem \ref{theo:MV}.
 
 There are two further differences in presentation compared to Appendix A of  \cite{NiSh}: we directly work over $K$, rather than $k\llpar t\rrpar$, which means that we ignore the monodromy action of the profinite group scheme $\hat{\mu}$ of roots of unity over $k$ (we will come back to this monodromy action in Section \ref{sec:monodromy}). Second, while the formulas in \cite{NiSh} were stated in terms of {\em open} strata in the special fibers of log smooth models, we will also express our formulas in terms of {\em closed} strata. This is more convenient for applications to rationality questions, and for a comparison with the invariant defined in \cite{KT}. Passing between open and closed strata can always be done by means of basic inclusion-exclusion arguments (see Lemma \ref{lemm:incexc}).
 
 Finally, let us also point out a typo in \cite{NiSh}: in the formulas in Theorem A.3.9 and Propostion A.4.1, the factor $(\LL-1)^{r_v(\sigma)}$ should be $(\LL-1)^{r_v(\sigma)-1}$, like in the expression for $\Vol_K(X)$ in the middle of page 407.

\subsection{Strictly toroidal models}
We say that a monoid $M$ is {\em toric} if it is sharp\footnote{A monoid $M$ is called {\em sharp} if 
the only invertible element of $M$ is the identity.}, finitely generated, integral, saturated and torsion free. Toric monoids are precisely those monoids that are isomorphic to the monoid of lattice points in a strictly convex rational polyhedral cone $\sigma$ in $\R^d$, for some positive integer $d$. There is an intrinsic definition of the faces of $M$ and their dimensions; if  $M=\sigma\cap \Z^d$ then the faces of $M$ are the submonoids of the form $\tau\cap \Z^d$ with $\tau$ a face of $\sigma$, and the dimension of $\tau\cap \Z^d$ equals the dimension of $\tau$.
 For any ring $A$, we denote by $A[M]$ the monoid $A$-algebra associated with $M$. Its elements are the finite $A$-linear combinations of monomials $\chi^m$ with $m$ in $M$.

Let $\cX$ be a flat separated $R$-scheme of finite type, and let $x$ be a point of the special fiber $\cX_k$. We say that $\cX$ is {\em strictly toroidal} at $x$ if there exist a toric monoid $M$, an open neighbourhood $\cU$ of $x$ in $\cX$, and a smooth morphism of $R$-schemes  
\begin{equation}\label{eq:tor}
\cU\to \Spec R[M]/(\chi^m-t^q) 
\end{equation} where $q$ is a positive rational number, and $m$ is an element of $M$ such that $k[M]/(\chi^m)$ is reduced. This implies in particular that $\cX$ is normal at $x$ and $\cX_k$ is reduced at $x$.
 We say that $\cX$ is strictly toroidal if it is so at every point $x$ of $\cX_k$. 

\begin{example}\label{exam:snctor}
 A flat separated $R$-scheme $\cX$ of finite type is called {\em strictly semi-stable} if, Zariski-locally, it admits a smooth morphism to a scheme of the form 
\begin{equation}\label{eq:sstable}
\Spec R[z_1,\ldots,z_n]/(z_1\cdot \ldots \cdot z_n-t^q)
\end{equation}
 for some $n\geq 1$ and some positive rational number $q$. Then $\cX$ is also strictly toroidal (take $M=\N^{n}$ and set $m=(1,\ldots,1)$). Every smooth and proper $K$-scheme $X$ has a strictly semi-stable proper $R$-model:
 we can descend $X$ to $k\llpar t^{1/d}\rrpar$ for some $d>0$; by Hironaka's resolution of singularities, we can then find a proper regular $k\llbr t^{1/d}\rrbr$-model $\cY$ whose special fiber is  a divisor with strict normal crossings (not necessarily reduced). An elementary local calculation now shows that the normalization of $\cY\times_{k\llbr t^{1/d}\rrbr}R$ is strictly semi-stable proper $R$-model of $X$.
\end{example}

\begin{example}\label{exam:tor}
 The class of strictly toroidal $R$-models is more flexible than that of strictly semi-stable models. This is useful in applications to rationality problems, as one is allowed to bypass an explicit resolution of singularities over $k\llbr t\rrbr$. For instance, one can skip the construction in Lemma 2.2 of Shinder's paper \cite{shinder}; other applications are given in Section \ref{ss:exam} and in \cite{NO}.
 
 Let $f_0\in k[z_0,\ldots,z_n]$ be a general homogeneous polynomial of positive  degree $d_0$. Let $f_1,\ldots,f_r\in k[z_0,\ldots,z_n]$ be general homogeneous polynomials of positive degrees $d_1,\ldots,d_r$, respectively, such that $d_1+\ldots +d_r=d_0$. Then 
$$\cX=\mathrm{Proj}\,R[z_0,\ldots,z_n]/(tf_0-f_1\cdot \ldots \cdot f_r)$$ is strictly toroidal, but not strictly semi-stable if $r\geq 2$ and $n\geq 3$: locally around the singular points of $\cX_k$ where $f_0$ vanishes, there is no smooth morphism to a scheme of the form \eqref{eq:sstable}.

 To see that $\cX$ is strictly toroidal, let $x$ be a point on $\cX_k$, and let $I$ be the set of indices $i\in \{0,\ldots,r\}$ such that $f_i(x)=0$. After a permuation of the coordinates $z_j$, we may assume that $z_0(x)\neq 0$. Since the polynomials $f_i$ are general, the regular functions $f_i/z_i^{d_i}$ with $i\in I$ form a part of a regular system of local parameters in $\mathcal{O}_{\mathbb{P}^n_{R},x}$. Therefore, there exists an open neighbourhood of $x$ in $\cX$ that admits a smooth morphism to the $R$-scheme 
 $$\cY=\Spec R[y_i\,|\,i\in I]/(tw - \prod_{i\in I\setminus \{0\}}y_i)$$
 where $w=y_0$ if $0\in I$ and $w=1$ otherwise. In the latter case, $\cY$, and therefore $\cX$, are strictly semi-stable and, thus, also strictly toroidal. In the former case, $\cY$ is isomorphic to the $R$-scheme 
 $$\Spec R[M]/(t-\chi^m)$$ where  $M$ is the quotient of the monoid $\N^I\times \N$ by the congruence relation generated by $$(e_0,1)\sim (\sum_{i\in I\setminus \{0\}}e_i,0),$$
 where $(e_i)_{i\in I}$ is the standard basis of $\N^I$, and $m$ is the residue class of $(0,1)\in \N^I\times \N$.
\end{example} 
 
 If $\cX$ is a strictly toroidal $R$-scheme, then a {\em stratum} $E$ of $\cX_k$ is a connected component of the intersection of a non-empty set of irreducible components of $\cX_k$. We denote by $\codim(E)$ the codimension of $E$ in $\cX_k$, and by $\mathcal{S}(\cX)$ the set of strata in $\cX_k$. 
 The {\em interior} $E^o$ of a stratum $E$ is the complement in $E$ of the union of strictly smaller strata. This interior $E^o$ is a connected smooth separated $k$-scheme of finite type, but $E$ may have singularities along the boundary $E\setminus E^o$. All of these singularities are strictly toroidal, in the sense that $E$ admits Zariski-locally an \'etale morphism to a toric $k$-variety. 
 
 We also attach to $E$ an element  in $\Grodim(\Vardim_k)$ given by the formula 
 $$P(E)=\sum_{E'\supset E}[\mathbb{G}^{\codim(E')}_{m,k}]_{\codim(E)}$$ where the sum is taken over all the strata $E'$ in $\cX_k$ that contain $E$. If $E$ is contained in precisely $\codim(E)+1$ irreducible components of $\cX_k$, then $P(E)=[\mathbb{P}^{\codim(E)}_k]_{\codim(E)}$ because the sum in the definition corresponds to the partition of  $\mathbb{P}^{\codim(E)}_k$ into torus orbits. This happens for instance when the toroidal structure of $\cX$ at the generic point of $E$ is simplicial. In general, one can write $P(E)$ as the class of a proper toric $k$-variety of dimension $\codim(E)$, which can be computed explicitly from the toroidal structure (that is, the monoid $M$) at the generic point of $E$. The only thing that matters for our applications is that  the image of $P(E)$ in $\Z[\Bir_k]$ with respect to the morphism in Proposition \ref{prop:grobir} is always equal to $\{\mathbb{P}^{\codim(E)}_k\}_{\bir}$; this follows immediately from the definition of $P(E)$.


\subsection{Construction of the motivic volume} 

\begin{lemma}\label{lemm:incexc}
If $\cX$ is a strictly toroidal $R$-scheme, then for every $e\geq \dim(\cX_k)$,
$$\sum_{E\in \mathcal{S}(\cX)}(-1)^{\codim(E)}[E^o\times_k \mathbb{G}^{\codim(E)}_{m,k}]_e 
= \sum_{E\in \mathcal{S}(\cX)}(-1)^{\codim(E)}[E]_{e-\codim(E)}P(E). $$
\end{lemma}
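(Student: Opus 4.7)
The plan is to use the scissor relations to expand the right-hand side, reorganize the resulting triple sum by interchanging the order of summation, and reduce the identity to a combinatorial vanishing statement on the face lattice of the local toric cone.

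I would begin by writing $[E]_{e-\codim(E)} = \sum_{F\subseteq E}[F^o]_{e-\codim(E)}$ (sum over strata $F$ of $\cX_k$ contained in $E$), using the scissor relations inside $\Gro(\Var^{\leqslant e-\codim(E)}_k)$. Substituting this, together with the definition $P(E)=\sum_{E''\supseteq E}[\mathbb{G}_{m,k}^{\codim(E'')}]_{\codim(E)}$, into the right-hand side and exchanging the order of summation yields
\[
\mathrm{RHS}
 \;=\;\sum_{F\subseteq E''}\biggl(\sum_{F\subseteq E\subseteq E''}(-1)^{\codim(E)}\biggr)\,[F^o\times_k\mathbb{G}_{m,k}^{\codim(E'')}]_e,
\]
where the outer sum is over ordered pairs of strata $F\subseteq E''$. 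The key algebraic observation making this rearrangement possible is that the product $[F^o]_{e-\codim(E)}\cdot[\mathbb{G}_{m,k}^{\codim(E'')}]_{\codim(E)}=[F^o\times_k\mathbb{G}_{m,k}^{\codim(E'')}]_e$ is independent of the intermediate stratum $E$.

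It then remains to prove the combinatorial identity
\[
\alpha(F,E'') \;:=\; \sum_{F\subseteq E\subseteq E''}(-1)^{\codim(E)} \;=\; \begin{cases} (-1)^{\codim(F)} & \text{if } E''=F,\\ 0 & \text{otherwise.}\end{cases}
\]
The first case is trivial. For the second, I would work locally at the generic point $\eta$ of $F$: the strictly toroidal hypothesis provides a rational polyhedral cone $\sigma$ of dimension $\codim(F)$ whose faces containing the face $\tau_{E''}$ corresponding to $E''$ are in order-reversing bijection with the strata of $\cX_k$ lying between $F$ (corresponding to $\sigma$ itself) and $E''$, the codimension of a stratum matching the dimension of the corresponding face. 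Under this bijection, $\alpha(F,E'')$ becomes $\sum_{\tau_{E''}\leq \tau\leq \sigma}(-1)^{\dim \tau}$, which factors as $(-1)^{\dim \tau_{E''}}\sum_{\tau'\leq \sigma/\tau_{E''}}(-1)^{\dim \tau'}$. When $F\subsetneq E''$, the quotient cone $\sigma/\tau_{E''}$ has positive dimension, and the alternating sum over its face lattice vanishes (equivalently, the Euler characteristic of the transverse polytope with the empty face included is zero). Substituting this back leaves only the diagonal contributions $F=E''$, giving $\sum_F(-1)^{\codim(F)}[F^o\times_k\mathbb{G}_{m,k}^{\codim(F)}]_e$, which is the left-hand side.

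The main technical obstacle is verifying the local description in the previous paragraph: namely that the strata of $\cX_k$ passing through $\eta$ and containing a given stratum $E''$ correspond bijectively to the faces of the local cone $\sigma$ in the interval $[\tau_{E''}, \sigma]$, with codimension equal to face dimension. This follows from unpacking the definition of a strictly toroidal scheme together with the standard orbit--cone correspondence on affine toric varieties, using that every face of $\sigma$ containing $\tau_{E''}$ automatically fails to lie in the face $\sigma\cap m^\perp$ carved out by the defining element $m$, and hence corresponds to an actual stratum of $\cX_k$ rather than merely to an orbit of the ambient affine toric variety.
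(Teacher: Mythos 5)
Your argument is correct and follows essentially the same route as the paper's: expand $[E]$ into open substrata, substitute the definition of $P(E)$, interchange the order of summation, and kill the inner sum over intermediate strata by identifying it with (a sign times) the compactly supported Euler characteristic of a strictly convex cone, which vanishes unless the cone is zero-dimensional. The only cosmetic difference is that you phrase the vanishing in terms of the quotient cone $\sigma/\tau_{E''}$ at the generic point of $F$, while the paper works directly with an abstract cone parametrizing the strata between $E'$ and $E''$; these are the same cone, and your final paragraph spells out a technical verification that the paper leaves implicit.
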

\begin{proof}
 Every stratum $E$ in $\cX_k$ is the disjoint union of its open substrata $(E')^o$, so that  $$[E]_{e-\codim(E)}=\sum_{E'\subset E}[(E')^o]_{e-\codim(E)}$$ in $\Grodim(\Vardim_k)$. Thus we can write the right hand side of the equality in the statement as 
\begin{eqnarray*}
 & &\sum_{E \in \mathcal{S}(\cX)}\sum_{E'\subset E}(-1)^{\codim(E)}[(E')^o]_{e-\codim(E)}P(E)
 \\ &=&\sum_{E'\in \mathcal{S}(\cX)}\left([(E')^o]_{e-\codim(E')}\sum_{E\supset E'}(-1)^{\codim(E)}P(E)\tau^{\dim(E)-\dim(E')} \right).
 \end{eqnarray*} 
By the definition of the element $P(E)$, we have for every $E'$ in $\mathcal{S}(\cX)$ that 
\begin{eqnarray*}
& & \sum_{E\supset E'}(-1)^{\codim(E)}P(E)\tau^{\dim(E)-\dim(E')}
\\ &=&
\sum_{E\supset E'}\left((-1)^{\codim(E)}\sum_{E''\supset E}[\mathbb{G}_{m,k}^{\codim(E'')}]_{e-\dim(E')} \right)
\\ &=& \sum_{E''\supset E'}\left([\mathbb{G}_{m,k}^{\codim(E'')}]_{e-\dim(E')}\sum_{E'\subset E \subset E''}(-1)^{\codim(E)}\right). 
\end{eqnarray*}
 We fix a stratum $E''$ in $\cX_k$ that contains $E'$. Since $\cX$ is strictly toroidal, there exists an inclusion preserving bijective correspondence between the strata $E$ in $\cX_k$ such that $E'\subset E\subset E''$, and the strict faces of a strictly convex rational polyhedral cone $\sigma$; the dimension of the face is equal to the dimension of the corresponding stratum minus $\dim(E')$.  It follows that  
 $$\sum_{E'\subset E \subset E''}(-1)^{\codim(E)}$$ is equal to $(-1)^{\codim(E')}$ times the compactly supported Euler characteristic of $\sigma$, which is $1$ if $E'=E''$ (then $\sigma$ has dimension $0$) and $0$ otherwise. Therefore, 
\begin{eqnarray*}
& &\sum_{E''\supset E'}\left([\mathbb{G}_{m,k}^{\codim(E'')}]_{e-\dim(E')}\sum_{E'\subset E \subset E''}(-1)^{\codim(E)}\right)
\\&=&(-1)^{\codim(E')}[\mathbb{G}_{m,k}^{\codim(E')}]_{e-\dim(E')}
\end{eqnarray*} and this implies the desired equality.
\end{proof}

The dimensional refinement of the motivic volume is characterized by the following theorem.

\begin{theorem}\label{theo:MV}
There exists a unique ring morphism 
$$\Vol\colon \Grodim(\Vardim_K)\to \Grodim(\Vardim_k)$$
such that for every strictly toroidal proper $R$-scheme $\cX$ with smooth generic fiber $X=\cX_K$, and for every integer $e\geq \dim(X)$, we have 
\begin{eqnarray}
\Vol([X]_e)&=& \sum_{E\in \mathcal{S}(\cX)}(-1)^{\codim(E)}[E^o\times_k \mathbb{G}^{\codim(E)}_{m,k}]_e \label{eq:motvol-open}
\\ &=& \sum_{E\in \mathcal{S}(\cX)}(-1)^{\codim(E)}[E]_{e-\codim(E)}P(E). \label{eq:motvol-closed}
\end{eqnarray}
\end{theorem}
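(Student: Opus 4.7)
My plan is to deduce the theorem from the unrefined version of \cite{NiSh} by checking that the arguments there are dimension-preserving, and to handle the added flexibility of strictly toroidal (as opposed to log smooth) models by a local calculation. First I would record that the two displayed formulas \eqref{eq:motvol-open} and \eqref{eq:motvol-closed} agree by Lemma \ref{lemm:incexc}, so one may work exclusively with the open-stratum form. Uniqueness is then immediate: by the refined Bittner presentation (Theorem \ref{thm:dimBitt}), $\Grodim(\Vardim_K)$ is generated as an abelian group by the classes $[X]_d$ with $X$ connected, smooth and proper over $K$ and $d\geq \dim(X)$, and by Example \ref{exam:snctor} any such $X$ admits a strictly semistable, hence strictly toroidal, proper $R$-model. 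Multiplication by $\tau$ takes care of the degree parameter, and specifying $\Vol$ on smooth proper generators pins down the ring map.

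For existence, the plan is to define $\Vol$ directly on the refined Bittner presentation. For a connected smooth proper $X/K$ and an integer $d\geq \dim(X)$, choose a strictly toroidal proper $R$-model $\cX$ and put
$$\Vol([X]_d^{\Bitt}) \;=\; \sum_{E\in \mathcal{S}(\cX)}(-1)^{\codim(E)}[E^o\times_k \mathbb{G}^{\codim(E)}_{m,k}]_d.$$
Three verifications are then required: (i) independence of the chosen strictly toroidal model $\cX$; (ii) compatibility with the blow-up relations \eqref{eq:dimblup}; (iii) multiplicativity. The value on a general class $[Y]_e$ with $\dim(Y)\leq e$ is then read off by expressing it as a $\Z$-linear combination of $\tau^{e-\dim(X)}[X]^{\Bitt}_{\dim(X)}$ via Theorem \ref{thm:dimBitt}. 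That $\tau$ acts correctly on the formula is immediate since raising the degree index by one multiplies each summand by $\tau$; the sanity check $\Vol([\Spec K]_1)=[\Spec k]_1$ is obtained from the trivial model $\cX=\Spec R$.

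The principal obstacle is (i). Following \cite{NiSh}, I would dominate two strictly toroidal proper models by a common modification and invoke weak toroidal factorization to reduce to invariance under a single elementary toroidal blow-up along a closed stratum. The ensuing identity in $\Gro(\Var_k^{\leqslant d})$ is a combinatorial computation with torus orbit decompositions of the exceptional fibres; every scheme that appears (strata, their interiors, the torus bundles, the toric varieties encoding $P(E)$, and the exceptional divisors) has dimension bounded by $d$, so the entire verification in \cite{NiSh} transports verbatim into the graded group $\Gro(\Var_k^{\leqslant d})$ and hence into $\Grodim(\Vardim_k)$. The slight additional generality of working with strictly toroidal, rather than log smooth, models is absorbed by performing all of these calculations in an \'etale neighbourhood of a point, where a chart \eqref{eq:tor} is available; the formula is local on $\cX_k$ in an obvious sense, so a Zariski-local toroidal structure suffices.

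Finally, for (ii) I would choose strictly toroidal models $\overline{Y}\subset \cX$ of $Y\subset X$ such that $\overline{Y}$ meets the strata of $\cX_k$ transversally, so that $\mathrm{Bl}_{\overline{Y}}\cX$ is again strictly toroidal (this is arranged by an equivariant toroidal resolution, as in \cite{NiSh}); the scissor relation then follows by comparing the stratifications on the two sides via the toric description of the blow-up. For (iii), the fibre product $\cX\times_R\cX'$ of two strictly toroidal models becomes strictly toroidal after a suitable toroidal subdivision that leaves the open-stratum sum unchanged, and one reads off multiplicativity from the product stratification. The compatibility with the unrefined motivic volume of \cite{NiSh} is built into the definition via Proposition \ref{prop:grogro}, since our formula reduces to the formula of \cite{NiSh} after setting $\tau=1$.
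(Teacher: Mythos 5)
Your plan follows the same high-level strategy as the paper: deduce the graded statement from the unrefined construction of \cite{NiSh} by observing that every scheme appearing in the arguments has dimension bounded by $d$, define $\Vol$ on the refined Bittner presentation via a chosen strictly toroidal model, reduce the two displayed formulas to one via Lemma \ref{lemm:incexc}, and obtain uniqueness from Theorem \ref{thm:dimBitt} plus Example \ref{exam:snctor}. The one place where the paper's organization differs, and is worth noting, concerns how you handle the absence of a global toroidal (log) structure. You propose to dominate two strictly toroidal proper models and invoke weak toroidal factorization directly, then patch the missing global structure by working in étale charts and appealing to the formula being ``local on $\cX_k$ in an obvious sense.'' This locality is exactly the nontrivial point, and the paper makes it precise by inserting an intermediate step: it first defines $\Vol(\cY)$ for an arbitrary separated flat $R$-scheme of finite type of pure relative dimension with smooth generic fiber, using a regular proper snc modification of a descent $\cY_0$ over some $k\llbr t^{1/n}\rrbr$, proves independence of choices via weak factorization over the Noetherian base $R_0$, and observes that $\Vol(\cY)$ satisfies an explicit inclusion--exclusion identity over finite open covers. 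Only then does it show that for strictly toroidal $\cX$ the stratum formula computes $\Vol(\cX)$, by reducing (via this locality) to a single chart $\cX\to\Spec R[M]/(t^{1/n}-\chi^m)$ and doing the KKMS subdivision argument there. This decoupling of ``locality'' from ``independence of model'' is what makes the passage from log smooth to strictly toroidal models clean; without it, the dominate-and-factor step you sketch in (i) would require a global toroidal factorization statement that the strictly toroidal hypothesis alone does not furnish. Your steps (ii) and (iii) match the paper's, including the transversality arrangement for the blow-up relation and the product of toroidal models.
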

\begin{proof}
The main difference with the set-up in Appendix A of \cite{NiSh} is that we do not have a globally defined log structure on $\cX$ such that the structural morphism to $\Spec R$ with its standard log structure is smooth. In the language of toroidal embeddings \cite{KKMS}, the problem can be described in the following way. Let $\cU$ be an open subscheme of $\cX$ as in \eqref{eq:tor} and let $R_0$ be a finite extension of $k\llbr t\rrbr$ in $R$ such that the morphism \eqref{eq:tor} is obtained from a smooth morphism of $R_0$-schemes  
$$\cU_0\to   \Spec R_0[M]/(t^{q}-\chi^m)$$ by extension of scalars to $R$. The pullback of the toric boundary on $ \Spec R_0[M]$ is a divisor $D$ on $\cU_0$ such that $\cU_0\setminus D\to \cU_0$ is a {\em toroidal embedding without self-intersection} over the discrete valuation ring $R_0$ in the sense of \cite{KKMS}. But the divisors $D$ do not necessarily glue to a divisor on the whole of $\cX$.

 To resolve this issue, we first define a local variant of the motivic volume. Let $\cY$ be a separated flat $R$-scheme of finite type of pure relative dimension $d$, with smooth generic fiber $\cY_K$. By resolution of singularities and the semi-stable reduction theorem, we can find a positive integer $n$, a model $\cY_0$ for $\cY$ over $R_0=k\llbr t^{1/n}\rrbr$ and a proper morphism of $R_0$-schemes  $h\colon \cZ\to \cY_0$ such that $h$ is an isomorphism on the generic fibers, $\cZ$ is regular, and $\cZ_k$ is a reduced divisor with strict normal crossings. Then we define the motivic volume of $\cY$ as
 $$\Vol(\cY)=\sum_{E\in \mathcal{S}(\cZ)}(-1)^{\codim(E)}[E^o\times_k \mathbb{G}^{\codim(E)}_{m,k}]_{d}\quad \in \Grodim(\Vardim_k).$$
 It can be deduced from the weak factorization theorem in \cite{AT} and some elementary calculations that this definition only depends on $\cZ$, and not on the choices of $n$, $\cY_0$ and $\cZ$: the arguments in Appendix A of \cite{NiSh} immediately carry over to our setting (see in particular Propositions A.3.5 and A.3.8). 
 It is clear from the definition that $\Vol(\cY)$ is local on $\cY$: for every finite open cover $\{\cU_{\alpha}\,|\,\alpha\in A\}$  of $\cY$, we have $$\Vol(\cY)=\sum_{\emptyset \neq B\subset A}(-1)^{|B|-1}\Vol(\cap_{\beta\in B}\cU_{\beta})$$ in $\Grodim(\Vardim_k)$.
 
 The next step is to prove that, when  $\cX$ is a {\em strictly toroidal} $R$-scheme of pure relative dimension $d$ with smooth generic fiber, we still have 
 $$\Vol(\cX)=\sum_{E\in \mathcal{S}(\cX)}(-1)^{\codim(E)}[E^o\times_k \mathbb{G}_{m,k}^{\codim(E)}]_d$$ 
 in $\Grodim(\Vardim_k)$.
 Since both sides of the expression are local on $\cX$, we may assume that there exists an \'etale morphism $\cX\to \Spec R[M]/(t^{q}-\chi^m)$ as in \eqref{eq:tor}, which descends to an \'etale morphism $\cX_0\to \Spec R_0[M]/(t^{q}-\chi^m)$ over some finite extension $R_0$ of $k\llbr t\rrbr$ in $R$. Denote by $D$ the pullback to $\cX_0$ of the toric boundary of $\Spec R_0[M]$.    Then the open embedding $\cX_0\setminus D\to \cX_0$ is a toroidal embedding without self-intersections in the sense of \cite{KKMS}, and one can use a suitable subdivision of the associated cone complex  to construct, over some finite extension of $R_0$, a proper morphism $\cZ\to \cX_0$ that is an isomorphism on the generic fibers and such that $\cZ$ is regular and $\cZ_k$ is a reduced divisor with strict normal crossings. A toric calculation shows that  the expression
 $$\sum_{E\in \mathcal{S}(\cX)}(-1)^{\codim(E)}[E^o\times \mathbb{G}_{m,k}^{\codim(E)}]_d$$ 
 remains invariant under this modification. This calculation is carried out in Section A.2 of \cite{NiSh} in the language of logarithmic schemes.

Now let $X$ be a connected smooth and proper $K$-scheme of dimension $d$, and let $\cX$  be a  strictly toroidal proper $R$-model of $X$. Since any pair of proper $R$-models of $X$ can be dominated by a common toroidal proper  $R$-model, the above arguments imply that $\Vol(\cX)$ only depends on $X$. Thus for every integer $e\geq d$,  we may define 
$$\Vol([X]_e)=\Vol(\cX)\tau^{e-d}=\sum_{E\in \mathcal{S}(\cX)}(-1)^{\codim(E)}[E^o\times_k \mathbb{G}_{m,k}^{\codim(E)}]_e.$$ 
The equality of the expressions \eqref{eq:motvol-open} and \eqref{eq:motvol-closed} follows from  Lemma \ref{lemm:incexc}.

 The final step of the proof is to show that $\Vol([X]_e)$ is multiplicative in $[X]_e$ and satisfies the blow-up relations in Bittner's presentation \ref{thm:dimBitt}. Multiplicativity follows easily from the fact that the product of two strictly toroidal $R$-schemes is again strictly toroidal (see \cite[A.3.7]{NiSh}). Let $Y$ be a connected smooth strict closed subscheme of $X$, and let 
 $\mathrm{Bl}_Y X\to X$  be the blow-up of $X$ along $Y$, with exceptional divisor $E$.
  We can find a proper strictly toroidal $R$-model $\cX$ of $X$ such that the schematic closure $\cY$ of $Y$ in $\cX$ has transversal intersections with the special fiber $\cX_k$. Then $\cY$ is a strictly toroidal proper $R$-model of $Y$. Moreover,  the  blow-up $\mathrm{Bl}_{\cY} \cX$ of $\cX$ along $\cY$ is a strictly toroidal proper  $R$-model of $\mathrm{Bl}_YX$, and the schematic closure of $E$ in $\mathrm{Bl}_{\cY} \cX$ is a strictly  toroidal proper $R$-model of $E$. Now a direct calculation shows that 
$$\Vol([X]_e)-\Vol([Y]_e)=\Vol([\mathrm{Bl}_YX]_e)-\Vol([E]_e)$$ in $\Grodim(\Vardim_k)$, for all integers $e\geq d$.
\end{proof}

\begin{corollary}\label{cor:spbir}
There exist unique ring morphisms 
$$\Vol_{\bir}\colon \Z[\Bir_K]\to \Z[\Bir_k],\quad \Vol_{\sbir}\colon \Z[\SB_K]\to \Z[\SB_k]$$
such that for every strictly toroidal proper $R$-scheme $\cX$ with smooth generic fiber $X=\cX_K$, we have 
\begin{eqnarray}
\Vol_{\bir}(\{X\}_{\bir})&=&\sum_{E\in \mathcal{S}(\cX)}(-1)^{\codim(E)}\{E\times_k \mathbb{P}_k^{\codim(E)}\}_{\bir},
\label{eq:birvol}
\\ 
\Vol_{\sbir}(\{X\}_{\sbir})&=&\sum_{E\in \mathcal{S}(\cX)}(-1)^{\codim(E)}\{E\}_{\sbir}.
\label{eq:sbvol}
\end{eqnarray}
 In particular, if $\cX$ is smooth and proper over $R$, then $\Vol_{\bir}(\{X\}_{\bir})=\{\cX_k\}_{\bir}$ and $\Vol_{\sbir}(\{X\}_{\sbir})=\{\cX_k\}_{\sbir}$.
\end{corollary}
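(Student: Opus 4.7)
The plan is to obtain both $\Vol_{\bir}$ and $\Vol_{\sbir}$ as quotients of the motivic volume constructed in Theorem \ref{theo:MV}, using the identifications $\Z[\Bir_F]\cong\Grodim(\Vardim_F)/(\tau)$ from Proposition \ref{prop:grobir} and $\Z[\SB_F]\cong \Gro(\Var_F)/\LL$ from Theorem \ref{thm:LL}. Uniqueness on the right-hand side will be clear because the images of smooth and proper classes already generate, via Bittner's presentation (Theorems \ref{thm:Bitt} and \ref{thm:dimBitt}).

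\emph{Construction of $\Vol_{\bir}$.} I would first observe that $\Spec R$ is a strictly toroidal proper $R$-model of $\Spec K$ (take the trivial toric monoid; the unique stratum is $\Spec k$, of codimension $0$). Plugging into \eqref{eq:motvol-closed} with $e=1$ yields $\Vol([\Spec K]_1)=[\Spec k]_1$, i.e.\ $\Vol(\tau_K)=\tau_k$. Consequently $\Vol$ sends the ideal $(\tau_K)$ into the ideal $(\tau_k)$ and induces a morphism of graded rings $\Grodim(\Vardim_K)/(\tau_K)\to\Grodim(\Vardim_k)/(\tau_k)$, which via Proposition \ref{prop:grobir} I identify with the desired $\Vol_{\bir}\colon \Z[\Bir_K]\to\Z[\Bir_k]$. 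To verify \eqref{eq:birvol}, I apply the ring morphism $\bir$ of Proposition \ref{prop:grobir} to the closed-stratum formula \eqref{eq:motvol-closed}. For each stratum $E$ of codimension $\codim(E)$ in $\cX_k$, the class $[E]_{d-\codim(E)}$ (with $d=\dim X$) has image $\{E\}_{\bir}$ because each stratum of a strictly toroidal model is equidimensional. The paper records the crucial fact that $\bir(P(E))=\{\PP^{\codim(E)}_k\}_{\bir}$. Multiplicativity of $\bir$ and of the ring structure on $\Z[\Bir_k]$ then give
\[
\bir\bigl([E]_{d-\codim(E)}\cdot P(E)\bigr)=\{E\}_{\bir}\cdot\{\PP^{\codim(E)}_k\}_{\bir}=\{E\times_k\PP^{\codim(E)}_k\}_{\bir},
\]
and summing with signs yields the right-hand side of \eqref{eq:birvol}.

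\emph{Construction of $\Vol_{\sbir}$.} For this I would invoke the ungraded motivic volume $\Vol\colon\Gro(\Var_K)\to\Gro(\Var_k)$ of \cite{NiSh}, which satisfies $\Vol(\LL)=\LL$; equivalently, this is the image of the graded $\Vol$ under the forgetful morphisms of Proposition \ref{prop:grogro}, and the identity on $\LL$ follows from $\Vol([\PP^1_K]_1)$ computed from the smooth and proper model $\PP^1_R$. Quotienting by $\LL$ on both sides and applying Theorem \ref{thm:LL} then yields a unique ring morphism $\Vol_{\sbir}\colon\Z[\SB_K]\to\Z[\SB_k]$. To verify \eqref{eq:sbvol}, I reduce the ungraded version of \eqref{eq:motvol-closed} modulo $\LL$: since $P(E)$ is the class of a proper toric $k$-variety, and such varieties are rational, one has $\{P(E)\}_{\sbir}=\{\Spec k\}_{\sbir}=1$, and the formula collapses to $\sum_E(-1)^{\codim(E)}\{E\}_{\sbir}$.

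\emph{The ``In particular'' statement.} If $\cX$ is smooth and proper over $R$, then $\cX_k$ is smooth and proper over $k$, so its irreducible components coincide with its connected components, and these are the only strata (all of codimension $0$). Plugging this into \eqref{eq:birvol} and \eqref{eq:sbvol} gives $\Vol_{\bir}(\{X\}_{\bir})=\{\cX_k\}_{\bir}$ and $\Vol_{\sbir}(\{X\}_{\sbir})=\{\cX_k\}_{\sbir}$, by the extension-by-additivity conventions defining $\{\cdot\}_{\bir}$ and $\{\cdot\}_{\sbir}$ on non-integral schemes. The main obstacle in this whole argument is minimal: essentially all the heavy lifting was already done in Theorem \ref{theo:MV}, and what remains is a careful bookkeeping of ring quotients and the observation that $P(E)$ is a rational toric class.
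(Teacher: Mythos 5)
Your construction of $\Vol_{\bir}$ agrees with the paper's proof exactly: observe that $\Spec R$ is a strictly toroidal proper model of $\Spec K$, deduce $\Vol(\tau_K)=\tau_k$, pass to the quotient by $(\tau)$ via Proposition~\ref{prop:grobir}, and use $\bir(P(E))=\{\PP^{\codim(E)}_k\}_{\bir}$ to read off formula~\eqref{eq:birvol} from~\eqref{eq:motvol-closed}.

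For $\Vol_{\sbir}$ you take a genuinely different route. The paper observes that $\PP^n_R$ is a strictly toroidal model of $\PP^n_K$, so $\Vol_{\bir}(\{\PP^n_K\}_{\bir})=\{\PP^n_k\}_{\bir}$, and then lets $\Vol_{\bir}$ descend along the natural multiplicative surjection $\Z[\Bir_F]\to \Z[\SB_F]$; formula~\eqref{eq:sbvol} then falls out of~\eqref{eq:birvol} for free because $\{\PP^{\codim(E)}_k\}$ maps to $1$. You instead start from the ungraded volume $\Vol\colon\Gro(\Var_K)\to\Gro(\Var_k)$, quotient by $\LL$, and invoke Theorem~\ref{thm:LL}. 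That is perfectly legitimate and in fact matches the discussion of $\Vol_{\sbir}$ at the start of Section~\ref{sec:motvol}. The trade-off is that you then have to re-prove the formula, and here your argument is a bit too quick. You write ``since $P(E)$ is the class of a proper toric $k$-variety, and such varieties are rational, one has $\{P(E)\}_{\sbir}=1$.'' But $P(E)$ is an element of the Grothendieck ring, not a scheme, and the toric variety $T$ with $[T]=P(E)$ may be singular, in which case the identity $\sbir([T])=\{T\}_{\sbir}$ is not available (the paper explicitly warns that $\sbir([X])\neq\{X\}_{\sbir}$ in general for non-smooth-proper $X$). The claim $\sbir(P(E))=1$ is nevertheless true, but it requires an argument: either apply $\sbir$ termwise to $P(E)=\sum_{E'\supset E}[\G_{m,k}^{\codim(E')}]$ and check that $\sum_{E'\supset E}(-1)^{\codim(E')}=1$ (an Euler-characteristic computation with the faces of the local cone at $E$), or, more in the spirit of the paper, observe that $P(E)$ already has image $\{\PP^{\codim(E)}_k\}_{\bir}$ in $\Z[\Bir_k]$ and that this maps to $1$ under $\Z[\Bir_k]\to\Z[\SB_k]$. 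Either repair is short, but the reasoning ``proper toric $\Rightarrow$ rational $\Rightarrow$ $\sbir=1$'' skips the step that makes it work.

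The ``In particular'' paragraph and the uniqueness argument via resolution of singularities, Example~\ref{exam:snctor}, and Bittner's presentation are all fine.
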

\begin{proof}
Since $\Spec R$ is a strictly toroidal proper $R$-model of $\Spec K$,  the motivic volume $\Vol$ maps $[\Spec K]_d$ to $[\Spec k]_d$ for every $d\geq 0$. It follows from Proposition \ref{prop:grobir} that $\Vol$ factors through a ring morphism $\Vol_{\bir}$; it satisfies the formula in the statement because $\bir(P(E))=\{\mathbb{P}^{\codim(E)}_k\}_{\bir}$ for every stratum $E$ in $\cX_k$.
 Since $\mathbb{P}^n_R$ is a strictly toroidal proper $R$-model of $\mathbb{P}^n_K$ for every $n\geq 0$, the morphism $\Vol_{\bir}$ maps $\{\mathbb{P}^n_K\}_{\bir}$ to $\{\mathbb{P}^n_k\}_{\bir}$. Because $\Vol_{\bir}$ is also multiplicative, it factors through a morphism $\Vol_{\sbir}$ as in the statement of the corollary.  
\end{proof}

\section{Applications to rationality problems}\label{ss:apprat}
\subsection{Specialization of birational types}
A first application of Corollary \ref{cor:spbir} is that it settles the long-standing question of specialization of (stable) birational equivalence. The case of stable birational equivalence was first proved in \cite{NiSh}; the stronger result for birational equivalence follows from the results in \cite{KT}. This application only uses the special case of formulas \eqref{eq:birvol} and \eqref{eq:sbvol} where $\cX$ is smooth over $R$.

\begin{theorem}\label{theo:spec}
 Let $S$ be a Noetherian scheme of characteristic zero, and let $\cX\to S$ and $\cY\to S$ be smooth and proper $S$-schemes. For every point $s$ of $S$, we fix a geometric point $\overline{s}$ supported at $s$.
  We define subsets $S_{\bir}(\cX,\cY)$ and $S_{\sbir}(\cX,\cY)$ of $S$ in the following way: 
 \begin{eqnarray*}
 S_{\bir}(\cX,\cY)&=&\{s\in S\,|\,\cX\times_S\overline{s}\sim_{\bir} \cY\times_S \overline{s}\},
 \\ S_{\sbir}(\cX,\cY)&=& \{s\in S\,|\,\cX\times_S\overline{s}\sim_{\sbir} \cY\times_S \overline{s}\},
 \end{eqnarray*}
 where $\sim_{\bir}$ and $\sim_{\sbir}$ denote birational equivalence and stable birational equivalence, respectively. 
  Then $S_{\bir}(\cX,\cY)$ and $S_{\sbir}(\cX,\cY)$ are countable unions of closed subsets of $S$.
\end{theorem}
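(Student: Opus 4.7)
My plan combines Corollary \ref{cor:spbir} with a relative Hilbert scheme argument. The first step is a local specialization statement: for any discrete valuation ring $A$ of equal characteristic zero with fraction field $F$ and residue field $\ell$, and smooth proper $A$-schemes $\cX, \cY$, one has $\cX_F \sim_{\bir} \cY_F \Rightarrow \cX_\ell \sim_{\bir} \cY_\ell$ (and similarly for stable birational equivalence). Indeed, after completing $A$ and using Cohen's structure theorem to identify $\widehat A$ with $\ell\llbr \pi \rrbr$ for a uniformizer $\pi$, one base changes to the ring $R_\ell = \bigcup_n \ell \llbr \pi^{1/n} \rrbr$ with fraction field $K_\ell$ (this fits the set-up of Section \ref{sec:motvol}). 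The base changes of $\cX$ and $\cY$ are smooth proper and therefore strictly toroidal $R_\ell$-models of their generic fibers, so Corollary \ref{cor:spbir} yields $\Vol_{\bir}(\{\cX_{K_\ell}\}_{\bir}) = \{\cX_\ell\}_{\bir}$ and likewise for $\cY$. Since both sides are single basis vectors in the free abelian group $\Z[\Bir_\ell]$, the equality $\{\cX_{K_\ell}\}_{\bir} = \{\cY_{K_\ell}\}_{\bir}$ (inherited from $\cX_F \sim_{\bir} \cY_F$ by base extension) forces $\{\cX_\ell\}_{\bir} = \{\cY_\ell\}_{\bir}$; the geometric-fiber version follows by further passing to algebraic closures of $F$ and $\ell$.

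From this I would derive that $S_{\bir}(\cX, \cY)$ and $S_{\sbir}(\cX, \cY)$ are stable under specialization in $S$: given $s_1 \in \overline{\{s_0\}}$ with $s_0 \in S_{\bir}(\cX, \cY)$, a standard construction (normalize a chain of primes in $\mathcal{O}_{S, s_1}$ and localize at a height-one prime) produces a DVR $A$ with a morphism $\Spec A \to S$ sending the generic point to $s_0$ and the closed point to $s_1$, and the local statement above applied to the pullbacks of $\cX$ and $\cY$ then forces $s_1 \in S_{\bir}(\cX, \cY)$. For the countable-union structure, Chow's lemma together with the invariance of (stable) birational equivalence under birational modifications lets us assume $\cX$ and $\cY$ are projective over $S$. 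For each Hilbert polynomial $P$ (of which there are countably many) the relative Hilbert scheme $\mathrm{Hilb}^P(\cX \times_S \cY / S)$ is projective over $S$, and inside it the locus $B_P$ of integral closed subschemes of pure relative dimension $\dim_S \cX$ whose projections to $\cX$ and $\cY$ both have degree one is locally closed, since those conditions are locally constant in a flat family with fixed Hilbert polynomial.

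Let $C_P \subset S$ be the image of $B_P$; by Chevalley's theorem it is constructible. Any $\kappa(s)$-point of $B_P$ gives a birational correspondence on $\cX_s \times_{\kappa(s)} \cY_s$ over $\kappa(s)$, hence on the geometric fibers, so $C_P \subset S_{\bir}(\cX, \cY)$; conversely, every $s \in S_{\bir}(\cX, \cY)$ lies in some $C_P$, because the graph of a birational map $\cX_{\bar s} \dashrightarrow \cY_{\bar s}$ has a well-defined Hilbert polynomial and descends to an $L$-valued point of the appropriate $\mathrm{Hilb}^P(\cX \times_S \cY / S)$ for some finite extension $L$ of $\kappa(s)$, whose image in the topological space $S$ is $s$. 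By the stability under specialization already established, every point of the Zariski closure $\overline{C_P}$ is a specialization of some point of $C_P \subset S_{\bir}(\cX, \cY)$, so $\overline{C_P} \subset S_{\bir}(\cX, \cY)$. We conclude $S_{\bir}(\cX, \cY) = \bigcup_P \overline{C_P}$, a countable union of closed subsets. The stable-birational statement follows from the analogous argument applied to the products $\cX \times_S \Pro^m_S$ and $\cY \times_S \Pro^n_S$, taking a countable union over the triples $(P, m, n)$. The main obstacle is the Hilbert-scheme bookkeeping, in particular verifying the constructibility of $B_P$ and handling the descent of birational correspondences from geometric fibers to finite extensions of $\kappa(s)$ (which only multiplies the cover by countably many extensions and therefore preserves countability).
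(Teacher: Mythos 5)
Your proposal follows essentially the same route as the paper: reduce the countable-union-of-closed-sets statement to (i) a Hilbert scheme argument giving a countable union of locally closed/constructible pieces and (ii) stability under specialization, which is obtained by pulling back to a DVR, passing to the Puiseux-series ring, and applying Corollary \ref{cor:spbir} to the (automatically strictly toroidal) smooth proper models. The paper simply delegates (i) to Proposition 2.3 of \cite{dFF} and the DVR/power-series reduction to the proof of Theorem 4.1.4 in \cite{NiSh}, whereas you spell both out; your account is correct modulo routine bookkeeping (e.g.\ ``integral'' should be ``geometrically integral'' for the openness in the Hilbert scheme, and the passage to algebraic closures of $F$ and $\ell$ needs the standard remark that a birational map over $\overline{\kappa(s_0)}$ descends to a finite extension and hence extends to the algebraically closed Puiseux field over $\overline{\ell}$).
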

\begin{proof}
It follows from a standard Hilbert scheme argument that $S_{\bir}(\cX,\cY)$ and $S_{\sbir}(\cX,\cY)$ are countable unions of {\em locally} closed subsets of $S$; see for instance Proposition 2.3 in \cite{dFF}, which is stated in a more restrictive setting but whose proof also confirms our more general statement. Therefore, it is sufficient to prove that $S_{\bir}(\cX,\cY)$ and $S_{\sbir}(\cX,\cY)$ are closed under specialization. Now one easily reduces to the case where $S=\Spec k\llbr t\rrbr$ and $k$ is algebraically closed; see the proof of Theorem 4.1.4 in \cite{NiSh}. Then $\cX\times_S \Spec R$ and $\cY\times_S \Spec R$ are smooth and proper $R$-schemes, and Corollary \ref{cor:spbir} implies that $\cX_k$ is  birational (resp.~stably birational) to $\cY_k$ if $\cX_K$ is birational (resp.~stably birational) to $\cY_K$.
\end{proof}

\begin{corollary}\label{cor:spec}
 Let $S$ be a Noetherian scheme of characteristic zero, and let $\cX\to S$ be a smooth and proper $S$-scheme. For every point $s$ of $S$, we fix a geometric point $\overline{s}$ supported at $s$.
  We define subsets $S_{\mathrm{rat}}(\cX)$ and $S_{\mathrm{srat}}(\cX)$ of $S$ in the following way: 
 \begin{eqnarray*}
 S_{\mathrm{rat}}(\cX)&=&\{s\in S\,|\,\cX\times_S\overline{s}\mbox{ is rational}\},
 \\ S_{\mathrm{srat}}(\cX)&=& \{s\in S\,|\,\cX\times_S\overline{s}\mbox{ is stably rational}\}.
 \end{eqnarray*}
  Then $S_{\mathrm{rat}}(\cX)$ and $S_{\mathrm{srat}}(\cX)$ are countable unions of closed subsets of $S$.
\end{corollary}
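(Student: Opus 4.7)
The plan is to deduce both statements directly from Theorem \ref{theo:spec} by making appropriate choices of the smooth and proper $S$-scheme $\cY$. The point is that rationality and stable rationality are, by their very definitions in the Terminology section, special instances of birational and stable birational equivalence with a distinguished target scheme that already comes in a natural family over $S$.

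For $S_{\mathrm{srat}}(\cX)$, I would recall that a scheme $X$ over a field $F$ is stably rational exactly when it is stably birational to $\Spec F$. So I would take $\cY = S$, viewed as a trivially smooth and proper $S$-scheme via the identity morphism. For every point $s$ of $S$ with geometric point $\overline{s}$ one has $\cY \times_S \overline{s} = \overline{s}$, so
$$S_{\mathrm{srat}}(\cX) = S_{\sbir}(\cX, S),$$
and Theorem \ref{theo:spec} immediately gives the desired description as a countable union of closed subsets of $S$.

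For $S_{\mathrm{rat}}(\cX)$, a scheme $X$ over $F$ is rational if and only if it is birational to $\mathbb{P}^d_F$ for some $d\geq 0$. I would therefore consider the countable family of smooth and proper $S$-schemes $\cY_d = \mathbb{P}^d_S$, for $d\geq 0$, and note that
$$S_{\mathrm{rat}}(\cX) = \bigcup_{d\geq 0} S_{\bir}(\cX, \mathbb{P}^d_S).$$
Theorem \ref{theo:spec} applied to each $\cY_d$ says that every set on the right-hand side is a countable union of closed subsets of $S$; since a countable union of countable unions of closed sets is again of that form, $S_{\mathrm{rat}}(\cX)$ has the required structure.

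I do not expect any serious obstacle: the whole argument is a formal reduction to Theorem \ref{theo:spec}. The only mild subtlety worth double-checking is that the union over $d$ causes no trouble, which is immediate set-theoretically, and in any case, since $\cX \to S$ is smooth and proper, the relative dimension is locally constant on $S$, so only finitely many values of $d$ contribute non-empty strata on each connected component of $S$.
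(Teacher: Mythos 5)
Your proof is correct and follows essentially the same route as the paper: both deduce the statement from Theorem \ref{theo:spec} by writing $S_{\mathrm{srat}}(\cX)=S_{\sbir}(\cX,S)$ and $S_{\mathrm{rat}}(\cX)=\bigcup_{d\geq 0}S_{\bir}(\cX,\mathbb{P}^d_S)$. Your closing observation that local constancy of the fiber dimension renders the union over $d$ locally finite also matches the paper's remark that on a connected $S$ a single $d$ suffices.
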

\begin{proof}
This follows immediately from Theorem \ref{theo:spec}, because 
$$S_{\mathrm{rat}}(\cX)=\bigcup_{n\geq 0}S_{\bir}(\cX,\mathbb{P}^n_S)$$
and $S_{\mathrm{srat}}(\cX)=S_{\sbir}(\cX,S)$. If $S$ is connected, we simply have $S_{\mathrm{rat}}(\cX)=S_{\bir}(\cX,\mathbb{P}^n_S)$ with $n$ the dimension of the fibers of $\cX\to S$.
\end{proof}

Countable unions cannot be avoided in the statements of Theorem \ref{theo:spec} and Corollary \ref{cor:spec}: in \cite{HPT}, Hassett, Pirutka and Tschinkel have constructed a smooth and proper family $\cX\to S$ over a complex variety $S$ such that $S_{\mathrm{rat}}$ is dense in $S$ but $S_{\mathrm{srat}}\neq S$.

\subsection{Obstruction to stable rationality}
 By contraposition, we can also use Corollary \ref{cor:spbir} as an obstruction to rationality or stable rationality of $\cX_K$.
 
\begin{theorem}\label{theo:obstruct}
Let $\cX$ be a strictly toroidal proper $R$-scheme. If 
$$\sum_{E\in \mathcal{S}(\cX)}(-1)^{\codim(E)}\{E\times \mathbb{P}^{\codim(E)}_k\}_{\bir}\neq \{\Spec k\}_{\bir}$$
in $\Z[\Bir_k]$, then $\cX_K$ is not rational. 
Similarly, if 
$$\sum_{E\in \mathcal{S}(\cX)}(-1)^{\codim(E)}\{E\}_{\sbir}\neq \{\Spec k\}_{\sbir}$$
in $\Z[\SB_k]$, then $\cX_K$ is not stably rational. Here the sums are taken over the strata $E$ in $\cX_k$.
\end{theorem}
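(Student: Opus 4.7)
The approach is contraposition via Corollary~\ref{cor:spbir}.

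For the first assertion, suppose $\cX_K$ is rational and set $d=\dim\cX_K$. Then $\{\cX_K\}_{\bir}=\{\mathbb{P}^d_K\}_{\bir}$ in $\Z[\Bir_K]$, so $\Vol_{\bir}(\{\cX_K\}_{\bir})=\Vol_{\bir}(\{\mathbb{P}^d_K\}_{\bir})$. Taking $\mathbb{P}^d_R$ as a strictly toroidal proper $R$-model of $\mathbb{P}^d_K$ (whose special fiber is a single stratum of codimension zero), formula~\eqref{eq:birvol} evaluates the right-hand side as $\{\mathbb{P}^d_k\}_{\bir}$, which is the rational birational class in degree $d$ appearing on the right-hand side of the theorem. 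Taking instead the given toroidal model $\cX$, formula~\eqref{eq:birvol} expresses the left-hand side as precisely the sum on the left of the theorem statement. Equating the two yields the desired identity, whose contrapositive is the stated obstruction.

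For the stable rationality assertion, the argument is formally identical, using $\Vol_{\sbir}$ and~\eqref{eq:sbvol}: if $\cX_K$ is stably rational then $\{\cX_K\}_{\sbir}=\{\Spec K\}_{\sbir}$, and applying Corollary~\ref{cor:spbir} to the strictly toroidal proper model $\Spec R$ of $\Spec K$ yields $\Vol_{\sbir}(\{\Spec K\}_{\sbir})=\{\Spec k\}_{\sbir}$. Applying~\eqref{eq:sbvol} to $\cX$ gives the sum in question, and equating the two yields the conclusion by contraposition.

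I expect no real obstacle here: the theorem is essentially just the contrapositive form of Corollary~\ref{cor:spbir}. All the substantive work — well-definedness of $\Vol_{\bir}$ and $\Vol_{\sbir}$ on $\Z[\Bir_K]$ and $\Z[\SB_K]$, and their computability from the strata of any strictly toroidal proper model via~\eqref{eq:birvol} and~\eqref{eq:sbvol} — has already been carried out in Theorem~\ref{theo:MV} and Corollary~\ref{cor:spbir}. The only things to verify are the elementary identities $\Vol_{\bir}(\{\mathbb{P}^d_K\}_{\bir})=\{\mathbb{P}^d_k\}_{\bir}$ and $\Vol_{\sbir}(\{\Spec K\}_{\sbir})=\{\Spec k\}_{\sbir}$, both of which are immediate by applying the formulas to the smooth proper models $\mathbb{P}^d_R$ and $\Spec R$ respectively.
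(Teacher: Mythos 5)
Your proof is correct and takes essentially the same approach as the paper, whose own proof consists of the single sentence ``This follows immediately from Corollary~\ref{cor:spbir}''; you have simply made the contraposition explicit. One small point worth flagging: for the first assertion your computation correctly shows that rationality of $\cX_K$ forces the alternating sum to equal $\{\mathbb{P}^d_k\}_{\bir}$, where $d=\dim\cX_K$. The theorem as printed compares the sum to $\{\Spec k\}_{\bir}$; but $\Z[\Bir_k]$ is the graded ring of Proposition~\ref{prop:grobir}, in which $\{\Spec k\}_{\bir}$ lies in degree $0$ while every term $\{E\times_k\mathbb{P}^{\codim(E)}_k\}_{\bir}$ lies in degree $\dim(E)+\codim(E)=d$. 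For $d\geq 1$ the printed inequality would therefore hold vacuously, yielding the absurd conclusion that (for instance) $\mathbb{P}^d_K$ with model $\mathbb{P}^d_R$ is not rational. The correct comparison class is $\{\mathbb{P}^d_k\}_{\bir}$, exactly as your derivation produces. By contrast, for the stable-rationality assertion $\{\Spec k\}_{\sbir}$ is literally correct, since $\Z[\SB_k]$ (from Theorem~\ref{thm:LL}) is ungraded and $\{\Spec k\}_{\sbir}$ is precisely the common class of all stably rational $k$-varieties.
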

\begin{proof}
This follows immediately from Corollary \ref{cor:spbir}.
\end{proof}
 
 These obstructions are not always easy to use in practice, because one needs to control the cancellations in the alternating sums, and thus understand the (stable) birational equivalences between the individual strata. Let us look at an interesting special case where cancellations do not occur.

\begin{corollary}\label{coro:obstruct}
Let $\cX$ be a strictly toroidal proper $R$-scheme. Suppose that every connected component of every stratum $E$ of even (resp.~odd) codimension in $\cX_k$ is stably rational, and that at least one connected component of some stratum of odd (resp.~even) codimension in $\cX_k$ is not stably rational. Then $\cX_K$ is not stably rational.
\end{corollary}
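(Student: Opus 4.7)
The plan is to apply the stable-rationality half of Theorem~\ref{theo:obstruct}. Thus it suffices to verify that
$$V := \sum_{E\in \mathcal{S}(\cX)}(-1)^{\codim(E)}\{E\}_{\sbir}$$
is not equal to $\{\Spec k\}_{\sbir}$ in the free abelian group $\Z[\SB_k]$. I would expand each $\{E\}_{\sbir}$ as the sum of the stable birational classes of its irreducible components (with reduced structure), so that $V$ becomes an explicit integer combination of basis elements of $\Z[\SB_k]$; the goal is then to exhibit a basis element different from $\{\Spec k\}_{\sbir}$ whose coefficient in $V$ is nonzero.

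Assume first that every component of every even-codimensional stratum is stably rational and that some odd-codimensional stratum has a component which is not. Then, for every $E$ of even codimension, the class $\{E\}_{\sbir}$ is a non-negative integer multiple of $\{\Spec k\}_{\sbir}$, so the even-codimensional part of $V$ contributes only to the $\{\Spec k\}_{\sbir}$-coefficient. Hence, for any basis element $\{Y\}_{\sbir}\neq \{\Spec k\}_{\sbir}$, the coefficient of $\{Y\}_{\sbir}$ in $V$ equals
$$-\#\bigl\{(E,i)\,\bigm|\, \codim(E)\text{ odd},\ E_i\sim_{\sbir} Y\bigr\},$$
where $E_i$ ranges over the irreducible components of $E$. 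By hypothesis there exists a non-stably-rational class $\{Y\}_{\sbir}$ for which this count is strictly positive, so the corresponding coefficient of $V$ is strictly negative, hence nonzero.

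The remaining case, where odd-codimensional strata are all stably rational and some even-codimensional stratum has a non-stably-rational component, is handled by a symmetric sign flip: now the odd-codimensional terms contribute only to $\{\Spec k\}_{\sbir}$, and some basis element $\{Y\}_{\sbir} \neq \{\Spec k\}_{\sbir}$ acquires a strictly \emph{positive} coefficient in $V$. In either case $V\neq \{\Spec k\}_{\sbir}$, and Theorem~\ref{theo:obstruct} delivers the conclusion. The only real point to watch, and the main obstacle in the argument, is the absence of cancellations: within strata of a single codimension parity, all contributions to a non-stably-rational basis element appear with the same sign, so they cannot cancel across strata, which is precisely what makes the hypothesis strong enough to guarantee $V\neq \{\Spec k\}_{\sbir}$.
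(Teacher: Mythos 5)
Your proof is correct and takes essentially the same approach as the paper: observe that, because of the sign $(-1)^{\codim(E)}$, every non-trivial basis element of $\Z[\SB_k]$ appearing in the alternating sum receives contributions only from strata of a single codimension parity and hence with a single sign, so no cancellation can occur, and the hypothesis produces at least one such non-trivial contribution. The paper phrases this slightly more compactly (all terms with one sign are multiples of $\{\Spec k\}_{\sbir}$, at least one term with the opposite sign is not), but it is the same argument.
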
  
\begin{proof}
The assumption implies that all the terms appearing with a positive (resp.~negative) sign in the sum $$\sum_{E\in \mathcal{S}(\cX)}(-1)^{\codim(E)}\{E\}_{\bir}$$ are integer multiples  $\{\Spec k\}_{\sbir}$, while at least one term with opposite sign is not a multiple of $\{\Spec k\}_{\sbir}$.
 Thus the whole sum is different from $\{\Spec k\}_{\sbir}$.
\end{proof}

In order to apply Theorem \ref{theo:obstruct} and Corollary \ref{coro:obstruct} to find new classes of non-stably rational varieties, one always needs non-trivial input, namely, a strictly toroidal degeneration such that at least one stratum in the special fiber is not stably rational and such that one can control the potential cancellations in the alternating sum of stable birational types. A convenient method to produce interesting strictly toroidal degenerations is provided by tropical geometry; this method is used in \cite{NO} to obtain various new stable irrationality results. The upshot of this technique is that one can deduce stable irrationality of a very general member of a family of varieties from the stable irrationality of special varieties in lower dimensions and/or degrees.

\subsection{Examples}\label{ss:exam}
We will discuss a few applications of the obstruction to stable rationality in Theorem \ref{theo:obstruct}. 
Examples \ref{exam:double4solid}, \ref{exam:4four}, \ref{exam:five} and \ref{exam:BB} have already been obtained by different methods in the literature; here our aim is merely to illustrate the general technique. To the best of our knowledge, Examples \ref{exam:new} and \ref{exam:new2}  are new.  More elaborate applications can be found in \cite{NO}, where, among other results, we prove the stable irrationality of very general quartic fivefolds  and various new classes of complete intersections (including very general $(2,3)$ complete intersections in $\mathbb{P}^6$).

 Throughout this section, we denote by $k$ an algebraically closed field of characteristic zero. 

\begin{example}\label{exam:double4solid}
Our first example is taken from Theorem 4.3.1 in \cite{NiSh}. We will deduce from Theorem \ref{theo:obstruct} that a very general quartic double solid over $k$ is not stably rational; this is a special case of a result by Voisin \cite{voisin}. By Corollary \ref{cor:spec}, it suffices to construct one non-stably rational smooth quartic double solid over some algebraically closed field of characteristic zero; our base field will be the field $K$ of Puiseux series over $k=\C$.
 
 As input we use Artin and Mumford's famous example of a stably irrational  quartic double solid $Y_0$ over $\C$ with only isolated ordinary double points as singularities \cite{AM}. Let $F_0\in \C[z_0,\ldots,z_3]$ be a homogeneous degree $4$ polynomial that defines the ramification divisor $D_0$ of the double cover $Y_0\to \mathbb{P}^3_{\C}$. Let $F$ be a general homogeneous degree $4$ polynomial in $\C[z_0,\ldots,z_3]$. Let $\mathscr{D}$ be the divisor in $\mathbb{P}^3_{\C[t]}$ defined by $F_0-tF=0$ and let $\cY\to \mathbb{P}^3_{\C[t]}$ be the double cover ramified along $\mathscr{D}$. Then $\cY$ is a regular proper  $\C[t]$-scheme with special fiber $Y_0$; its generic fiber is a smooth quartic double solid. Let $\cY'\to \cY$ be the blow-up at the singular points of $Y+0$ and let $\cX$ be the normalization of $\cY'\times_{\C[t]}R$.
  By Example \ref{exam:snctor}, the $R$-scheme $\cX$ is strictly semi-stable. Its special fiber has a unique stably irrational stratum, namely, the strict transform of $Y_0$. Thus, it follows from Theorem \ref{theo:obstruct} that the smooth quartic double solid $\cX_K$ is stably irrational.
\end{example}

\begin{example}\label{exam:4four}
The next application concerns stable non-rationality of very general quartic hypersurfaces of dimensions $4$ and $5$. In the fourfold case, this was first proved by Totaro as a special case of the general bound he established in \cite{totaro}; this bound was further improved (and extended to positive characteristic) by Schreieder in \cite{schreieder}. The fivefold case was first proved in \cite{NO}, as as an application of the tropical techniques we develop in that paper. Here we will treat both cases in a uniform way, without invoking tropical methods.

Let $n\in \{4,5\}$ and let $F\in k[z_0,\ldots,z_{n+1}]$ be a homogeneous polynomial of degree $4$, which we choose to be very general subject to the condition that $F$ is invariant under the transposition of the variables $z_n$ and $z_{n+1}$. 
Set 
$$\cX=\mathrm{Proj}\,R[z_0,\ldots, z_{n+1},y]/(yt-z_nz_{n+1},y^2-F)$$ where the variable $y$ has weight $2$.  The generic fiber $\cX_K$ is a smooth quartic hypersurface in $\PP^{n+1}_K$ (we can make the substitution $y=z_nz_{n+1}/t$ because $t$ is invertible in $K$).

 The $R$-scheme $\cX$ is strictly toroidal: away from the locus $Z$ defined by $y=t=z_n=z_{n+1}=0$, the scheme 
 $$\cX'=\mathrm{Proj}\,k\llbr t\rrbr[z_0,\ldots,z_{n+1},y]/(yt-z_nz_{n+1},y^2-F)$$ is regular and its special fiber is a reduced divisor with strict normal crossings, so that $\cX=\cX'\times_{k[t]}R$ is strictly semi-stable away from $Z$. On the other hand, for every $i$  in $\{0,\ldots,n-1\}$, the projection morphism $$D_+(z_i)\to \Spec R\left[\frac{z_n}{z_i},\frac{z_{n+1}}{z_i},\frac{y}{z_i^2}\right]/\left(\frac{y}{z_i^2}t-\frac{z_n}{z_i}\frac{z_{n+1}}{z_i}\right)$$ is smooth along $Z\cap D_+(z_i)$, so that $\cX$ is also strictly toroidal along $Z$.

The special fiber $\cX_k$ has three strata. The two irreducible components $E_1$ and $E_2$, given by $z_n=0$ and $z_{n+1}=0$, are isomorphic because of the symmetry of $F$. When $n=4$, their intersection is a very general quartic double solid, and thus not stably rational by Example \ref{exam:double4solid}. When $n=5$, we conclude similarly using the result that very general quartic double fourfolds over $k$ are also not stably rational, by \cite{HPTdouble}.

 Thus, in each case, we have 
 $$\{E_1\}_{\sbir}+\{E_2\}_{\sbir}-\{E_1\cap E_2\}_{\sbir}=2\{E_1\}_{\sbir}-\{E_1\cap E_2\}_{\sbir}\neq \{\Spec k\}_{\sbir}$$ in $\Z[\SB_k]$, and it follows from Theorem \ref{theo:obstruct} that $\cX_K$ is not stably rational. Now Corollary \ref{cor:spec} implies that a very general quartic fourfold  or fivefold is not  stably rational.
\end{example}

\begin{example}\label{exam:five}
Next, we prove that very general sextic fivefolds and sixfolds are not stably rational. These cases  also fall in the range of results in \cite{totaro} and \cite{schreieder}.  
 Our starting points are the stable irrationality of very general complete intersections of three quadrics in $\mathbb{P}^6_{k}$ \cite[\S4.4]{HT} and in $\mathbb{P}^7_k$ \cite{HPT-3quad}.

Let $(Q_1,Q_2,Q_3)$ be a very general triple of quadratic forms in $k[z_0,\ldots,z_n]$, where $n$ is either $6$ or $7$. Let $F$ be a general sextic form in $k[z_0,\ldots,z_n]$. 
Then the $R$-scheme 
$$\cX=\mathrm{Proj}\,R[z_0,\ldots,z_n]/(tF-Q_1Q_2Q_3)$$ is strictly toroidal, by Example \ref{exam:tor}. Since smooth quadrics and smooth intersections of two quadrics in $\mathbb{P}^n_k$ are rational, the only non-stably rational stratum in $\cX_k$ is the triple intersection defined by 
$Q_1=Q_2=Q_3=0$.  Theorem \ref{theo:obstruct} now implies that $\cX_K$ is not stably rational, so that a very general sextic fivefold and sixfold are not stably rational by Corollary \ref{cor:spec}. 
 \end{example}

\begin{example}\label{exam:BB}
In this example, we will prove that for every $d\geq 2$, a very general hypersurface in $\mathbb{P}_k^2\times_k \mathbb{P}^2_k$ of bidegree $(2,d)$ is not stably rational. This was the main result in \cite{BB}.
 
 As input we use the property that very general hypersurfaces of bidegree $(2,2)$ in $\mathbb{P}_k^2\times_k \mathbb{P}^2_k$ are not stably rational by \cite[\S8.2]{HT}. This settles the $d=2$ case. We prove the general case by induction on $d$. Assume that $d>2$ and that the result holds for hypersurfaces of bidegree $(2,d-1)$. Let $F,G\in k[z_0,z_1,z_2,w_0,w_1,w_2]$ be very general bihomogeneous polynomials of bidegree $(2,d)$ and $(2,d-1)$, respectively. Consider the closed subscheme $\cX$ of $\mathbb{P}^2_R\times_R \mathbb{P}^2_R$ defined by $tF-w_2G=0$.  Then $\cX$ is strictly toroidal, by the same argument as in Example \ref{exam:tor}. The special fiber $\cX_k$ has three strata: the linear space in $\mathbb{P}^2_k\times_k \mathbb{P}^2_k$ defined by $w_2=0$; the very general bidegree $(2,d-1)$ hypersurface defined by $G=0$, which is not stably rational by the induction hypothesis; and their intersection given by $w_2=G=0$, which is a smooth bidegree $(2,d-1)$ hypersurface in $\mathbb{P}^2_k\times_k \mathbb{P}^1_k$. The latter is a conic bundle over $\PP^1_k$, which is rational by Tsen's theorem. Now Theorem \ref{theo:obstruct} implies that $\cX_K$ is not stably rational, so that a very general bidegree $(2,d)$ hypersurface in $\mathbb{P}^2_k\times_k \mathbb{P}^2_k$ is not stably rational by Corollary \ref{cor:spec}.
 \end{example}

As a further illustration, we will discuss two applications that have not yet appeared in the literature.
\begin{example}\label{exam:new}
The first new result is that a very general intersection of a bidegree $(1,2)$ hypersurface and a bidegree $(2,2)$ hypersurface in $\mathbb{P}_k^2\times_k \mathbb{P}^4_k$ is not stably rational. Such a variety is fibered in quartic del Pezzo surfaces {via} the projection to $\mathbb{P}^2_k$; not much appears to be known about stable rationality of del Pezzo fibrations over $\mathbb{P}^n_k$ for $n\geq 2$. 

 The argument is similar to that in Example \ref{exam:BB}. 
 Let $$F,G,H \in k[z_0,z_1,z_2,w_0,\ldots,w_4]$$ be very general bihomogeneous polynomials of bidegree $(1,2)$, $(1,2)$ and $(2,2)$, respectively. Then the closed subscheme of $\mathbb{P}^2_R\times_R \mathbb{P}^4_R$ defined by   $F=tH-z_2G=0$ is strictly toroidal, by a similar calculation as in Example \ref{exam:4four}.  The generic fiber $\cX_K$ is a smooth complete intersection of  a bidegree $(1,2)$ hypersurface and a bidegree $(2,2)$ hypersurface in $\mathbb{P}_K^2\times_K \mathbb{P}^4_K$. The special fiber $\cX_k$ contains three strata: 
 the two irreducible components $E_1$ and $E_2$, given by $\{F=G=0\}$ and $\{F=z_2=0\}$, respectively, and their intersection $E_1\cap E_2$. The component $E_1$ is a very general complete intersection of two bidegree $(1,2)$ hypersurfaces in $\mathbb{P}^2_k\times_k \mathbb{P}^4_k$; this is birational to $\mathbb{P}^4_k$ {via} the projection to the second factor. 
  The component $E_2$ is a smooth bidegree $(1,2)$ hypersurface in $\mathbb{P}^1_k\times_k \mathbb{P}^4_k$, and, therefore, a quadric bundle over $\PP^1_k$ (which is rational by Tsen's theorem).  
  
  The intersection $E_1\cap E_2$ is a very general intersection of two bidegree $(1,2)$ hypersurfaces in $\mathbb{P}^1_k\times_k \mathbb{P}^4_k$. Such an intersection is not stably rational by Theorem 2 in \cite{HT} (it is a very general quartic del Pezzo fibration over $\mathbb{P}^1_k$ with height invariant $h=20$). Now Corollary \ref{coro:obstruct} implies that $\cX_K$ is not stably rational.
  \end{example}

\begin{example}\label{exam:new2}
Using a similar construction, we can prove that a very general intersection of a bidegree $(1,1)$ hypersurface and a bidegree $(2,2)$ hypersurface in $\mathbb{P}_k^3\times_k \mathbb{P}^3_k$ is not stably rational. This fourfold is a conic bundle over $\mathbb{P}^3_k$ via the second projection.

Let $$F,G \in k[z_0,\ldots,z_3,w_0,\ldots,w_3]$$ be very general bihomogeneous polynomials of bidegree $(2,2)$ and $(1,1)$ respectively. As before, the closed subscheme of $\mathbb{P}^3_R\times_R \mathbb{P}^3_R$ defined by $F=tG-z_3w_3=0$ is strictly toroidal. The generic fiber $\cX_K$ is a smooth complete intersection of  a bidegree $(1,1)$ hypersurface and a bidegree $(2,2)$ hypersurface in $\mathbb{P}_K^3\times_K \mathbb{P}^3_K$. The special fiber $\cX_k$ has two irreducible components $E_1$ and $E_2$, given by $\{F=z_3=0\}$ and $\{F=w_3=0\}$. 
 The strata $E_1$ and $E_2$ are isomorphic to very general bidegree $(2,2)$ hypersurfaces in $\PP^2_k\times_k \PP^3_k$ and  $\PP^3_k\times_k \PP^2_k$, respectively; therefore, they are both non-stably rational by \cite{HPT}. Now the result follows from Theorem \ref{theo:obstruct}.  \end{example}

\section{The monodromy action}\label{sec:monodromy}
If $X$ is a scheme of finite type over $k\llpar t\rrpar$, rather than $K$, then the motivic volume of $X\times_{k\llpar t\rrpar}K$ defined in \cite{NiSh} carries additional structure: an action of the profinite group scheme $\hat{\mu}$ of roots of unity over $k$. This structure captures the monodromy action on the cohomology of $X$ and plays an important role in the theory of motivic Igusa zeta functions \cite{DL}. In this final section we will briefly explain how this structure can also be defined for our dimensional refinement of the motivic volume.

\subsection{The equivariant Grothendieck ring}
The equivariant version of the classical Grothendieck ring of varieties was defined by Denef and Loeser; we will follow the construction in Section 2.3 of \cite{NiSh}, which is the most appropriate for our purposes.

 Let $F$ be a field of characteristic zero, and let $G$ be a profinite group scheme over $F$. We say that a quotient group scheme $H$ of $G$ is {\em admissible} if the kernel of
$G(F^a)\to H(F^a)$ is an open subgroup of the profinite group $G(F^a)$, where $F^a$ denotes an algebraic closure of $F$. In particular, $H$ is a finite group scheme over $F$.

 Let $d$ be a non-negative integer. The Grothendieck group $\Gro^{G}(\Var^{\leqslant d}_F)$ of $F$-varieties of dimension at most $d$ with $G$-action is the abelian group with the following presentation:
\begin{itemize}
\item {\em Generators}: isomorphism classes $[X]_d$ of $F$-schemes $X$ of finite type and of dimension at most $d$ endowed with a good $G$-action. Here ``good'' means that the action factors through an admissible quotient of $G$ and that we can cover $X$ by $G$-stable affine open subschemes (the latter condition is always satisfied when $X$ is quasi-projective). Isomorphism classes are taken with respect to $G$-equivariant isomorphisms.
\item {\em Relations}: we consider two types of relations.
\begin{enumerate}
\item {\em Scissor relations}: if $X$ is a $F$-scheme of finite type of dimension at most $d$ with a good $G$-action and $Y$ is a $G$-stable closed subscheme of $X$, then
$$[X]_d=[Y]_d+[X\setminus Y]_d.$$
\item {\em Trivialization of linear actions}: let $X$ be a $F$-scheme of finite type with a good $G$-action, and let $V$ be a $F$-vector scheme of dimension $m$ with a good linear action of $G$. Assume that $\dim(X)+m\leq d$. 
 Then $$[X\times_F V]_d=[X\times_F \A^m_F]_d$$ where the $G$-action on $X\times_F V$ is the diagonal action and the action on $\A^m_F$ is trivial.
\end{enumerate}
\end{itemize}

We set 
$$\Grodim^G(\Var_F)=\bigoplus_{d\geq 0}\Gro^G(\Var_F^{\leqslant d}).$$
This graded abelian group has a unique graded ring structure such that 
$$[X]_d\cdot [X']_e=[X\times_F X']_{d+e}$$ 
for all $F$-schemes $X$, $X'$ of finite type of dimensions at most $d$ and $e$, respectively, and with good $G$-action. The $G$-action
on $X\times_F X'$ is the diagonal action. We write $\LL$ for the class $[\A^1_F]_1$ (where $\A^1_F$ carries the trivial $G$-action) in the ring $\Grodim^{G}(\Vardim_F)$.

 If $F'$ is a field extension of $F$, then we have an obvious base change morphism
 $$\Grodim^{G}(\Vardim_F)\to \Grodim^{G}(\Vardim_{F'}):[X]\mapsto [X\times_F F'].$$
If $G'\to G$ is a continuous morphism of profinite group schemes, then we can also consider the restriction morphism
$$\mathrm{Res}^G_{G'}:\Grodim^{G}(\Vardim_F)\to \Grodim^{G'}(\Vardim_F).$$
Both of these morphisms are ring homomorphisms.

\subsection{The monodromy action on the motivic volume}
Let $X$ be a smooth and proper $k\llpar t\rrpar$-scheme, and let $\cX$ be a proper flat $k\llbr t\rrbr$-scheme endowed with an isomorphism of $k\llpar t\rrpar$-schemes $\cX\times_{k\llbr t\rrbr}k\llpar t\rrpar\to X$. Assume that $\cX$ is regular and that the special fiber $\cX_k$ is a strict normal crossings divisor (not necessarily reduced). Then we call $\cX$ an {\em snc-model} of $X$.  We write 
$$\cX_k=\sum_{i\in I}N_iE_i$$ where $\{E_i\,|\,i\in I\}$ is the set of irreducible components of $\cX_k$, and the coefficients $N_i$ are their multiplicities.

 For every non-empty subset $J$ of $I$, we set 
 $$E_J=\bigcap_{j\in J}E_j,\quad E_J^o=E_J\setminus \left(\bigcup_{i\in I\setminus J}E_i\right).$$ 
 The scheme $E_J$ is a smooth and proper $k$-scheme of pure codimension $|J|-1$ in $\cX_k$,  and $E_J^o$ is a dense open subscheme of $E_J$. The subschemes $E_J^o$ form a partition of $\cX_k$.
  Let $n$ be the least common multiple of the multiplicities $N_i$, and let $\cY$ be the normalization of $\cX\times_{k\llbr t\rrbr}k\llbr t^{1/n}\rrbr$. An easy local calculation shows that $\cY$ is strictly toroidal. The $k$-group scheme $\mu_n$ of $n$-th roots of unity acts on $\Spec k\llbr t^{1/n}\rrbr$ {\em via} the morphism
$$k\llbr t^{1/n}\rrbr\to k[\zeta]/(\zeta^n-1)\otimes_k k\llbr t^{1/n}\rrbr$$ that maps a formal power series $\phi(t^{1/n})$ to $\phi(\zeta^{-1}t^{1/n})$. This induces an action of $\mu_n$ on $\cX\times_{k\llbr t \rrbr}k\llbr t^{1/n} \rrbr$, and also on its normalization $\cY$ because $\cY\times_k \mu_n$ is normal since $\mu_n$ is \'etale over $k$. 
 For every non-empty subset $J$ of $I$, the $\mu_n$-action on $\cY$ restricts to a $\mu_n$-action
on $\widetilde{E}_J=\cY\times_{\cX}E_J$ and $\widetilde{E}_J^o=\cY\times_{\cX}E^o_J$. By composition with the projection $\hat{\mu}\to \mu_n$ we obtain actions of $\hat{\mu}$ on $\widetilde{E}_J$ and $\widetilde{E}_J^o$, and it follows immediately from the construction that these actions are good. A more explicit description of the schemes $\widetilde{E}_J$ with their $\hat{\mu}$-actions can be found in Section 2.3 of \cite{tameram}; see also Section 4.1 in \cite{logzeta}.

\begin{theorem}\label{theo:MVmon}
There exists a unique ring morphism 
$$\Vol\colon \Grodim(\Vardim_{k\llpar t\rrpar})\to \Grodim^{\hat{\mu}}(\Vardim_{k})$$
such that, for every smooth and proper $K$-scheme $X$, every snc-model $\cX$ of $X$ with $\cX_k=\sum_{i\in I}N_iE_i$, and every integer $e\geq \dim(X)$, we have 
\begin{eqnarray}
\Vol([X]_e) &=& \sum_{\emptyset \neq J\subset I} (-1)^{|J|-1}[\widetilde{E}^o_J\times_k \mathbb{G}_{m,k}^{|J|-1}]_e \label{eq:volmu-open}
\\ &=&\sum_{\emptyset \neq J\subset I} (-1)^{|J|-1}[\widetilde{E}_J\times_k \mathbb{P}_k^{|J|-1}]_e \label{eq:volmu-closed}
\end{eqnarray}
where $\hat{\mu}$ acts trivially on the schemes $\mathbb{G}_{m,k}^{|J|-1}$ and $\mathbb{P}_k^{|J|-1}$. 
It fits into a commutative diagram 
$$\begin{CD} 
\Grodim(\Vardim_{k\llpar t \rrpar})@>\Vol>> \Grodim^{\hat{\mu}}(\Vardim_k)
\\ @VVV @VV \mathrm{Res}^{\hat{\mu}}_{\{1\}} V
\\ \Grodim(\Vardim_K)@>>\Vol>\Grodim(\Vardim_k) 
\end{CD}$$
where the left vertical arrow is the base change morphism and the right vertical arrow 
 forgets the $\hat{\mu}$-action. 
  
Moreover, for every integer $m>0$, we also have a commutative diagram
 $$\begin{CD} 
\Grodim(\Vardim_{k\llpar t \rrpar})@>\Vol>> \Grodim^{\hat{\mu}}(\Vardim_k)
\\ @VVV @VV\mathrm{Res}^{\hat{\mu}}_{\hat{\mu}(m)} V
\\ \Grodim(\Vardim_{k\llpar t^{1/m} \rrpar})@>>\Vol>\Grodim^{\hat{\mu}(m)}(\Vardim_k) 
\end{CD}$$
 where the left vertical arrow is the base change morphism and the right vertical arrow 
 restricts the $\hat{\mu}$-action to $\hat{\mu}(m)=\ker(\hat{\mu}\to \mu_m)$.
\end{theorem}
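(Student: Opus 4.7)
The approach is to mirror the proof of Theorem \ref{theo:MV}, but keeping track of the Galois action that appears when we pass from $\cX$ to its normalized ramified cover. The starting observation is that, given a smooth proper $k\llpar t\rrpar$-scheme $X$ with snc-model $\cX$ and multiplicities $(N_i)_{i\in I}$, the normalization $\cY$ of $\cX\times_{k\llbr t\rrbr}k\llbr t^{1/n}\rrbr$ (with $n=\mathrm{lcm}(N_i)$) is strictly toroidal, and the Galois action of $\mu_n$ restricts to good actions on $\widetilde{E}_J$ and $\widetilde{E}_J^o$ (this is an entirely local computation on each toric chart and is already used to define the $\widetilde{E}_J$ preceding the theorem statement). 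I would then take \eqref{eq:volmu-open} as a definition on classes of smooth proper $k\llpar t\rrpar$-schemes and first verify, by the exact inclusion-exclusion of Lemma \ref{lemm:incexc}, that it equals \eqref{eq:volmu-closed}; the argument carries over verbatim since $\hat{\mu}$ acts trivially on the $\mathbb{G}_{m,k}^{|J|-1}$ and $\mathbb{P}_k^{|J|-1}$ factors, so every scissor relation used in the proof of that lemma is $\hat{\mu}$-equivariant.

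Next I would prove independence of the chosen snc-model. The plan is to combine the weak factorization theorem of \cite{WF}, which gives a chain of admissible blow-ups along smooth strata connecting any two snc-models of $X$, with an equivariant version of the local toric calculation used in the proof of Theorem \ref{theo:MV}. The new feature is that the exceptional divisor of such a blow-up, pulled back to the cover $\cY$, splits locally as $\widetilde{Z}\times_k V$ where $V$ is a $k$-vector scheme carrying a linear $\mu_n$-action; here the trivialization-of-linear-actions relation in $\Grodim^{\hat{\mu}}(\Vardim_k)$ is exactly what is needed to replace $V$ by $\A^{\dim V}_k$ with trivial action. This reduces the bookkeeping to the non-equivariant calculation already carried out in \cite{NiSh}, so the alternating sum is invariant under the blow-up. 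One also has to check independence of $n$: replacing $n$ by $mn$ pulls back the $\mu_n$-action along the surjection $\mu_{mn}\to \mu_n$, which is compatible with the admissibility condition and the definition of classes in $\Grodim^{\hat{\mu}}(\Vardim_k)$.

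Once $\Vol$ is defined on classes $[X]_e$ for smooth proper $X$, I would verify multiplicativity (using that the product of two strictly toroidal models is strictly toroidal, as in \cite[A.3.7]{NiSh}) and the blow-up relations \eqref{eq:dimblup}; the latter follows from the fact that the schematic closure of a transversal smooth subvariety $Y\subset X$ in an snc-model is itself an snc-model of $Y$ after pullback, and its blow-up is an snc-model of $\mathrm{Bl}_Y X$. The refined Bittner presentation (Theorem \ref{thm:dimBitt}) then upgrades the construction to a unique morphism of graded rings $\Vol\colon \Grodim(\Vardim_{k\llpar t\rrpar})\to \Grodim^{\hat{\mu}}(\Vardim_k)$.

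Finally, the two commutative diagrams would be checked directly from the defining formula \eqref{eq:volmu-open}: applying $\mathrm{Res}^{\hat{\mu}}_{\{1\}}$ returns the classes of $\widetilde{E}_J^o\times_k \mathbb{G}_{m,k}^{|J|-1}$ without equivariant structure, and these are precisely the strata of $\cY$ viewed as a strictly toroidal model of $X\times_{k\llpar t\rrpar}K$, matching \eqref{eq:motvol-open}. For the second diagram one verifies that base change along $k\llpar t\rrpar\hookrightarrow k\llpar t^{1/m}\rrpar$ replaces $n$ by $mn$ at the level of covers and correspondingly restricts the $\mu_{mn}$-action to $\mu_{n}\cdot\mu_m/\mu_m$, which at the profinite level is exactly the restriction $\hat{\mu}\to\hat{\mu}(m)$. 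I expect the main obstacle to be the independence step, since it requires combining weak factorization with a careful application of the linear-action relation; once that step is under control, everything else follows formally or reduces to the non-equivariant arguments of \cite{NiSh}.
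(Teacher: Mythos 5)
Your proposal is correct and follows essentially the same route as the paper, which simply states that one copies the proof of Theorem A.3.9 in \cite{NiSh} and that all arguments (including the inclusion-exclusion of Lemma \ref{lemm:incexc} and the compatibility with base change to $k\llpar t^{1/m}\rrpar$) carry over to the dimensional refinement. Your identification of the trivialization-of-linear-actions relation as the new ingredient needed for the equivariant weak-factorization step is the right observation, and the rest of your plan mirrors the paper's (very terse) argument accurately.
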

\begin{proof}
One can simply copy the proof of Theorem A.3.9 in \cite{NiSh}; all the arguments remain valid in the dimensional refinement of the Grothendieck ring. The equality of \eqref{eq:motvol-open} and \eqref{eq:motvol-closed} follows from the same calculation as in the proof of Lemma \ref{lemm:incexc}.
 The arguments in Section A.3 of \cite{NiSh} prove the existence and uniqueness of the morphism $\Vol$ in the statement, and its compatibility with base change to extensions $k\llpar t^{1/m}\rrpar$. The compatibility with the morphism $\Vol$ on $\Grodim(\Vardim_K)$ follows directly from the fact that $\cY\times_{k\llbr t^{1/n}\rrbr}R$ is a strictly toroidal proper $R$-model of $X\times_{k\llpar t\rrpar}K$.
\end{proof}

\providecommand{\arxiv}[1]{{\tt{arXiv:#1}}}

\end{document}